\documentclass[12pt]{amsart}
\usepackage{amsfonts, amsmath, amsthm}
\usepackage{graphicx}
\usepackage[left=1.5cm,top=1.5cm,right=2cm,includehead,includefoot]{geometry}
\usepackage{fourier}
\usepackage{microtype}

\newtheorem{pro}{Proposition}[section]
\newtheorem{thm}[pro]{Theorem}
\newtheorem{lem}[pro]{Lemma}

\newtheorem{cor}[pro]{Corollary}

\theoremstyle{definition}
\newtheorem{dfn}[pro]{Definition}

\newtheorem{notate}[pro]{Notation}

\theoremstyle{remark}
\newtheorem*{rmk}{Remark}

\newcommand{\wt}{\widetilde}

\newcommand{\del}{\partial}

\def\vol[#1]{\mbox{\rm Vol}(#1)}

\title[Tetrahedral number of manifolds of bounded volume]{A linear bound on
  the tetrahedral number of manifolds of bounded volume (after J{\o}rgensen and
  Thurston)}

\date{\today} 
\address{Department of Mathematics, Nara Women's University
Kitauoya Nishimachi, Nara 630-8506, Japan} \address{Department of mathematical
Sciences, University of Arkansas, Fayetteville, AR 72701}
\email{tsuyoshi@cc.nara-wu.ac.jp} \email{yoav@uark.edu} \author{Tsuyoshi
Kobayashi} \author{Yo'av Rieck} \thanks{The first names author was supported
by Grant-in-Aid for scientific research, JSPS grant number 00186751. The
second named author was supported in part by
the 21st century COE program ``Constitution for wide-angle mathematical basis
focused on knots" (Osaka City University); leader: Akio Kawauchi.\\
\copyright{2011 by the American Mathematical Society}
}

\begin{document}

\subjclass{}%
\keywords{}%

\date{\today}%
\dedicatory{To Bus Jaco, on the event of his 70th birthday}%


\maketitle

\section{Introduction}
\label{sec:intro}

The purpose of this note is to prove Theorem~\ref{thm} below, due to J{\o}rgensen 
and Thurston and known to  experts in the field; the earliest reference we found 
to this theorem and a sketch of its proof appears in  Thurston's
notes~\cite[Chapter~5]{thurston}.
For  definitions and notation see the next section.  We study the triangulation of
the thick part.  For another geometric study of the triangulation of the thick part
see Breslin's \cite{breslin}.  We prove:

\begin{thm}[J\o rgensen, Thurston]
  \label{thm}
Let $\mu > 0$ be a Margulis constant for $\mathbb{H}^3$.  Then for any $d>0$ there exists
a constant $K  >0$, depending on $\mu$ and $d$,  so that for any complete finite volume 
hyperbolic 3-manifold $M$, $N_d(M_{[\mu,\infty)})$ 
can be triangulated using at most $K \vol[M]$ tetrahedra.
\end{thm}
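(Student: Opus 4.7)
The plan is to pick a suitably separated net in the thick part, show its cardinality grows at most linearly with $\vol[M]$, and then expand it into a triangulation of $N_d(M_{[\mu,\infty)})$ of bounded local complexity. Fix a small constant $r>0$ depending on $\mu$, and choose a maximal $r$-separated set $\{y_1,\ldots,y_m\}\subset M_{[\mu,\infty)}$, meaning $d(y_i,y_j)\geq r$ for $i\neq j$ with the set maximal with respect to this condition. Since every point of the thick part has injectivity radius bounded below by a fixed constant determined by $\mu$, the balls $B(y_i,r/2)$ are embedded in $M$ (provided $r$ was chosen small enough) and pairwise disjoint; each has volume equal to $V_0=\vol[B_{\mathbb{H}^3}(r/2)]$, a positive constant depending only on $\mu$. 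Hence $m\leq \vol[M]/V_0$, which is already a linear bound on the number of vertices.

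By maximality, the balls $B(y_i,r)$ cover $M_{[\mu,\infty)}$, so $\{B(y_i,R)\}_{i=1}^m$ with $R=r+d$ covers $N_d(M_{[\mu,\infty)})$. I next bound the local combinatorial complexity: for each $i$, the number of indices $j$ with $d(y_i,y_j)\leq 2R$ is at most a constant $C=C(\mu,d)$. Indeed, lifting to $\mathbb{H}^3$ and choosing preimages $\tilde y_i$ of $y_i$ and $\tilde y_j$ of each such $y_j$ with $d(\tilde y_i,\tilde y_j)\leq 2R$, the balls $B(\tilde y_j,r/2)$ are pairwise disjoint (since the $B(y_k,r/2)$ are disjointly embedded in $M$) and all contained in $B(\tilde y_i,2R+r/2)$; comparing hyperbolic volumes gives $C\leq \vol[B_{\mathbb{H}^3}(2R+r/2)]/V_0$.

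From here I would construct the triangulation as the Delaunay decomposition dual to the Voronoi decomposition determined by $\{y_i\}$, restricted to $N_d(M_{[\mu,\infty)})$ and then subdivided into tetrahedra (for instance, by coning from each vertex $y_i$). Each Voronoi cell has diameter on the order of $R$ and at most $C$ codimension-one faces, so it contributes at most a uniformly bounded number $C'$ of tetrahedra. Summing over $i$ yields at most $mC'\leq (C'/V_0)\vol[M]$ tetrahedra, proving the theorem with $K=C'/V_0$.

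The main obstacle I anticipate is the boundary $\del N_d(M_{[\mu,\infty)})$ and the interaction of the Voronoi construction with the thin part. The boundary cuts across some Voronoi cells, so one must either add a controlled number of boundary vertices (from a separated net on $\del N_d(M_{[\mu,\infty)})$) or carefully truncate cells without destroying the linear count; similarly, Voronoi cells may intrude into thin tubes or cusps, where the geometry is worse and the simple embeddedness of balls can fail, and they must be clipped appropriately. An alternative, perhaps cleaner, approach is to thicken the cover $\{B(y_i,R)\}$ into a good cover, apply the nerve theorem to obtain a PL triangulation, and pull back to $M$; but in either case the linear bound is driven by the same two facts: (i) $m=O(\vol[M])$, and (ii) each vertex has uniformly bounded degree $C$.
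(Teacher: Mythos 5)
Your proposal takes essentially the same route as the paper --- a maximal separated net in the thick part, a linear bound on its cardinality via a packing argument, and then a Voronoi-type decomposition with bounded local combinatorics --- but it stops exactly at the point where the real work begins, and it even says so. The final paragraph explicitly names "the main obstacle I anticipate" (how the Voronoi cells interact with $\del N_d(M_{[\mu,\infty)})$ and the thin part) and then offers alternatives without carrying any of them out. That obstacle is not a technicality; it is the theorem.

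There are two concrete gaps. First, the claim that each Voronoi cell "contributes at most a uniformly bounded number $C'$ of tetrahedra" does not follow from having bounded diameter and boundedly many faces. After clipping by $X$, a Voronoi cell can be disconnected and each piece is a handlebody of a priori unknown genus; you cannot cone such a region from a single interior vertex. The paper has to prove that $V_i\cap X$ deformation retracts onto a surface in $\del X$, bound the number of components and the genus of each, and then build an explicit totally geodesic $2$-complex that cuts each piece into a ball. Second, and more fundamentally, a bound on the number of faces per cell does not by itself bound the number of tetrahedra needed, because the number of vertices in the induced cell structure is what actually controls the triangulation size, and that can blow up even with few faces (the paper's lens-space example: two solid tori plus two meridian disks give a cell structure with $p$ vertices). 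The paper gets around this by arranging that all faces involved --- the Voronoi faces and the cutting disks --- are totally geodesic, so a geodesic edge meets a totally geodesic face at most once; this is the mechanism that turns "bounded number of faces and edges" into "bounded number of vertices," and it is absent from your sketch. Relatedly, your "Delaunay dual, then subdivide by coning" step must be shown to produce triangulations that agree on shared faces of adjacent cells, and you do not address this at all; the paper builds the triangulation face-by-face precisely so that adjacent cells induce the same triangulation on their common boundary. Until these points are filled in, the argument does not close.
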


\bigskip

\noindent  The manifold $N_d(M_{[\mu,\infty)})$ is the closed $d$-neighborhood of the 
$\mu$-thick part of $M$
and is denoted by $X$ throughout this paper.   By the Margulis Lemma, $M \setminus X$
consists of disjoint cusps and open solid tori, and each of these solid tori is a regular neighborhood of an embedded 
closed geodesic.  We refer to removing such an open solid torus neighborhood of a geodesic $\gamma$
as {\it drilling out} $\gamma$.  Thus $X$ is obtained from $M$ by drilling 
out short geodesics.  Note that any preimage of any component of $M \setminus X$
in the universal cover $\mathbb H^{3}$ is convex (for an explicit description see Definition~\ref{dfn:cone}).
In the next proposition we bring the basic facts about $X$;  this
proposition is independent of Theorem~\ref{thm}.  For a complete Riemannian manifold
$A$ and a point $a \in A$, we denote the radius of injectivity of $A$ at $a$ by
$\mbox{inj}_{A}(a)$. 

\begin{pro}
  \label{pro:X}
 Fix the notation of Theorem~\ref{thm}.    Then the following hold:
  \begin{enumerate}
  \item There exists $R = R(\mu,d) > 0$, independent of $M$, so that for any $x \in X$, 
  $\mbox{\rm inj}_{M}(x) > R$.
  \item $M_{[\mu,\infty)} \subset X \subset M$, and $X$ is obtained from $M$ by drilling out 
  short geodesics and truncating cusps.  In particular:
  \begin{enumerate}
  \item If $\gamma \subset M$ is a geodesic of length less than $2R$ then $\gamma$ is drilled out.
 \item If $\gamma \subset M$ is a simple geodesic of length at least $2\mu$ then $\gamma$ is not drilled out. 
  \end{enumerate}    
  \end{enumerate}  
\end{pro}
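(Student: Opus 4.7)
The plan is to use the Margulis decomposition of $M$ into the thick part $M_{[\mu,\infty)}$ and the thin part (consisting of cusp neighborhoods and Margulis tubes around short closed geodesics), and to handle both claims by case analysis on the position of a point inside this decomposition.

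For (1), I would fix $x \in X$ and split into three cases. If $x \in M_{[\mu,\infty)}$, then $\operatorname{inj}_M(x) \geq \mu/2$ by definition. If $x$ lies in a cusp $C$, then since $x \in X$ and $\partial C \subset M_{[\mu,\infty)}$, we have $d(x,\partial C)\leq d$; using the standard horoball model of the cusp, the injectivity radius decays like $e^{-t}$ as $t$ increases into the cusp, giving $\operatorname{inj}_M(x) \geq (\mu/2)\,e^{-d}$. If $x$ lies in a Margulis tube $T$ around a closed geodesic of length $\ell$ with twist $\tau$ and tube radius $r_0$, I would use the standard hyperbolic-trigonometric formula
\[
\cosh d_{1}(r) \;=\; \cosh^{2}(r)\,\cosh \ell \;-\; \sinh^{2}(r)\,\cos\tau
\]
for the length $d_{1}(r)$ of the shortest homotopically nontrivial loop at a point at radial distance $r$ from the core, together with the relation $d_{1}(r_0)=\mu$ that defines the boundary of $T$. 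Since $x \in X \cap T$ forces $r \geq \max(0,\,r_0 - d)$ and $d_{1}$ is monotone non-decreasing in $r$, splitting into the sub-cases $r_0 \leq d$ (entire tube contained in $X$, so the minimum is $\ell/2$ for an $\ell$ that can be bounded below in terms of $\mu$ and $d$) and $r_0 > d$ (only a shell near $\partial T$ is in $X$) yields a positive lower bound on $\operatorname{inj}_M(x)$ depending only on $\mu$ and $d$. Taking the minimum of the bounds produced in the three cases gives $R = R(\mu,d) > 0$.

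For (2), the inclusion $M_{[\mu,\infty)} \subset X \subset M$ is immediate from the definition of $X$. The assertion that $X$ is obtained from $M$ by drilling short geodesics and truncating cusps follows by removing the closed $d$-collar of $\partial(\text{thin part})$ from each thin component supplied by the Margulis lemma: a cusp becomes a smaller (truncated) cusp, a Margulis tube with $r_0 \leq d$ is entirely absorbed into $X$, and a Margulis tube with $r_0 > d$ yields an open solid torus coaxial with its core geodesic. For (2a), if $\ell(\gamma) < 2R$ then every $x \in \gamma$ satisfies $\operatorname{inj}_M(x) \leq \ell(\gamma)/2 < R$, so by (1) we have $x \notin X$; hence $\gamma \subset M \setminus X$, and since $\gamma$ is connected and no cusp component of $M \setminus X$ contains a closed geodesic, $\gamma$ must lie in one of the drilled solid torus components, forcing $\gamma$ to be its core. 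For (2b), the cores of the drilled solid tori are short geodesics whose lengths are controlled by the Margulis constant, in particular strictly less than $2\mu$; so a simple closed geodesic of length at least $2\mu$ cannot be such a core, hence is not drilled.

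The main obstacle will be the Margulis-tube sub-case of (1): producing a uniform positive lower bound on $d_{1}(r)$ as $(\ell,\tau,r_0)$ ranges over all tube parameters satisfying $d_{1}(r_0)=\mu$, with $r \geq \max(0,\,r_0 - d)$. This reduces to an explicit hyperbolic-trigonometric estimate, and one must handle the boundary regimes $r_0 \to 0$, $r_0 \to d^{+}$, and $r_0 \to \infty$ carefully. The cusp estimate in (1), the structural description in (2), and the assertions (2a) and (2b) are by comparison direct consequences of the Margulis decomposition.
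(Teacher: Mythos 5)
Your proposal tracks the paper closely on part~(2) and on~(2)(a),~(2)(b): the paper likewise argues that a geodesic of length $< 2R$ has injectivity radius $< R$ at each of its points and so lies in the drilled-out part, and that simple geodesics in $M_{(0,\mu]}$ necessarily have length $< 2\mu$ (since a geodesic lies in the thin part iff it is a power $\delta^{n}$ of a short geodesic $\delta$, and is simple iff $n=1$). There your argument is essentially the paper's, with a bit more spelled out.

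For part~(1), however, you take a genuinely different route. The paper disposes of~(1) in one line by citing a general decay-of-injectivity-radius estimate for manifolds of bounded curvature (\cite[Proposition~4.19]{decay}): if $\operatorname{inj}_M(p) \geq \mu$ and $d(p,q) \leq d$ then $\operatorname{inj}_M(q) \geq R(\mu,d) > 0$, uniformly. You instead propose an explicit case analysis (thick part / cusp / Margulis tube) using hyperbolic trigonometry. This would in principle yield explicit constants, which the paper's citation does not, and the cusp estimate $\operatorname{inj}_M(x) \gtrsim \mu e^{-d}$ is correct. But the Margulis-tube case as written has a real gap, and it is exactly the place you flag as the obstacle. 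The formula
\[
\cosh d_{1}(r) \;=\; \cosh^{2}(r)\,\cosh \ell \;-\; \sinh^{2}(r)\,\cos\tau
\]
is the displacement of a \emph{single} generator $g$ of the tube group; the injectivity radius at radius $r$ is $\tfrac12\min_{n\geq 1} L_{n}(r)$ with $\cosh L_n(r) = \cosh(n\ell)\cosh^2 r - \cos(n\tau)\sinh^2 r$. For short $\ell$ and twist $\tau$ bounded away from $0$ modulo $2\pi$ (e.g.\ $\tau = \pi$), the minimum at the tube boundary is attained by $n=2$ (or higher), not $n=1$, so your defining relation ``$d_{1}(r_{0})=\mu$'' does not describe where the tube boundary actually is: one only knows $\min_n L_n(r_0) = 2\mu$, and the minimizing $n$ can change with $r$. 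A correct direct proof must control $\min_{n} L_n(r_0-d)$ given $\min_{n} L_n(r_0) = 2\mu$, minimizing over all $n$ simultaneously; this is substantially more delicate than the single-generator estimate you set up, and is presumably why the paper delegates the whole thing to the cited decay lemma. (There is also a normalization slip throughout: with the paper's convention $M_{[\mu,\infty)} = \{\,p : \operatorname{inj}_M(p) \geq \mu\,\}$, the thick part satisfies $\operatorname{inj}_M(x) \geq \mu$, not $\mu/2$, and the shortest loop length at $\partial T$ is $2\mu$, not $\mu$.)
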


\begin{proof}[Proof of Proposition~\ref{pro:X}]
It follows from the decay of radius of injectivity (see, for example,  
\cite[Proposition~4.19]{decay}) that  there exists $R>0$, depending 
only on $\mu$ and $d$,  so that   for any $x \in X$, 
$\mbox{\rm inj}_{M}(x) > R$.  This establishes~(1).

By construction $M_{[\mu,\infty)} \subset X \subset M$.
Let $U$ be a component of $M_{(0,\mu]}$.  The set of points removed 
from $U$ is:
$$\{x \in U | d(x, M \setminus U) > d \}.$$ 
When $U$ is a solid torus
neighborhood of a closed short geodesic $\gamma$, the set of points removed 
is $\{x \in U | d(x,\gamma) \leq d(X,\gamma) - d\}$, and is 
either empty or an open solid torus neighborhood of $\gamma$.  In the
first case $U \subset X$ and in the second case we remove a neighborhood of 
$\gamma$.  When $U$ is a cusp, $(M \setminus X) \cap U$ is isotopic to $U$. 
This establishes~(2).  

Let $\gamma \subset M$ be a geodesic of length less than $2R$.  Then for every
$p \in \gamma$, $\mbox{inj}_{M}(p) < R$.  By~(1) above  
$\gamma$ is drilled out.  This establishes~(2)(a).

Let $\gamma \subset M$ be a simple geodesic of length at least $2\mu$.  
It is clear from the definitions that if $\gamma$
is drilled out then $\gamma \subset M_{[0,\mu)}$.  
A geodesic is contained in $M_{(0,\mu]}$ if and only if it covers a short geodesic (that is,
has the form $\delta^{n}$ for some geodesic $\delta$ with $l(\delta) < 2\mu$ and some $n>0$).  Such
a geodesic is simple if and only if $n=1$; we conclude that simple geodesics in $M_{(0,\mu]}$
are shorter than $2\mu$.  Thus $\gamma \not\subset M_{(0,\mu]}$, and it is not 
drilled out.  This establishes~2(b).
\end{proof}

\bigskip\bigskip

\noindent We denote by $t_C (M)$ the minimal number of tetrahedra 
required to triangulate a link exterior in $M$, that is, the minimal number
of tetrahedra required to triangulate $M \setminus \mathring N(L)$, where the minimum
is taken over all links $L \subset M$ (possibly, $L = \emptyset$) and all 
possible triangulations.  Similarly
we define $t_{HC}(M)$ to be the minimal number of tetrahedra necessary to triangulate
$M \setminus \mathring N(L)$, where $L \subset M$ ranges over all possible hyperbolic
links.  As a consequence of Theorem~\ref{thm} we get the following corollary,
showing that $\vol[M]$, $T_C(M)$, and $T_{HC}(M)$ are the same up-to
linear equivalence:

\begin{cor}
Let $\mu > 0$ be a Margulis constant for $\mathbb{H}^3$ and fix $d>0$.  Let $K>0$
be the constant given in Theorem~\ref{thm} and $v_3$ be the volume of a regular 
ideal tetrahedron in $\mathbb{H}^3$.  Then for any complete finite volume hyperbolic
3-manifold $M$ we have:
$$t_C(M) \leq t_{HC}(M) \leq K \vol[M] \leq K v_3 t_C(M) 
					\leq K v_3 t_{HC}(M).$$
\end{cor}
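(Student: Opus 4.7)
The plan is to verify the chain as four separate inequalities; the outer two are the same elementary observation, the middle-right inequality is a standard straightening estimate, and the middle-left inequality is where Theorem~\ref{thm} actually enters.

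The inequalities $t_C(M) \leq t_{HC}(M)$ and $K v_3 t_C(M) \leq K v_3 t_{HC}(M)$ are literally the same statement, and both follow at once from the definitions: every hyperbolic link in $M$ is in particular a link in $M$, so the infimum defining $t_C(M)$---taken over all links, including $L = \emptyset$---is at most the infimum defining $t_{HC}(M)$---taken only over hyperbolic links.

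For the middle-right inequality $K \vol[M] \leq K v_3 t_C(M)$, after cancelling $K$ it suffices to prove $\vol[M] \leq v_3 t_C(M)$. I would invoke the classical Gromov--Thurston straightening argument: pick any link $L \subset M$ and any triangulation of $M \setminus \mathring N(L)$ realizing $t_C(M)$, lift each topological tetrahedron to $\mathbb{H}^3$ via the developing map, and replace it by the geodesic tetrahedron on the same four vertices. The straightened simplicial chain represents the fundamental class of the pair $(M,L)$, so the total hyperbolic volume of the straightened tetrahedra is at least $\vol[M]$; since $v_3$ is by definition an upper bound for the volume of any geodesic tetrahedron in $\mathbb{H}^3$, summing gives the desired inequality.

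The content of the corollary is the middle-left inequality $t_{HC}(M) \leq K \vol[M]$. I would apply Theorem~\ref{thm} to obtain a triangulation of $X = N_d(M_{[\mu,\infty)})$ with at most $K \vol[M]$ tetrahedra. By Proposition~\ref{pro:X}, $X$ is obtained from $M$ by drilling out a (possibly empty) collection of disjoint short closed geodesics $L \subset M$ and truncating the cusps, so $X$ is homeomorphic to the standard compact model of the link exterior $M \setminus \mathring N(L)$; the triangulation of $X$ thus transfers to a triangulation of $M \setminus \mathring N(L)$ using at most $K \vol[M]$ tetrahedra. The main obstacle, and the only point requiring additional geometric input, is to verify that $L$ qualifies as a hyperbolic link in the sense used in the definition of $t_{HC}(M)$---namely, that $M \setminus L$ admits a complete finite-volume hyperbolic structure. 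This is the classical drilling theorem (Kojima, implicit in Thurston's work) that removing a disjoint union of simple closed geodesics from a complete finite-volume hyperbolic 3-manifold again yields such a manifold; the relevant local geometry is already encoded in the convexity of the preimages of the components of $M \setminus X$ in $\mathbb{H}^3$ noted after the statement of the theorem.
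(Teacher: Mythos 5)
Your proposal is correct and follows essentially the same route as the paper: the outer inequalities are definitional, the middle-left inequality combines Theorem~\ref{thm} with Kojima's theorem that drilling geodesics preserves hyperbolicity, and the middle-right inequality is the Gromov--Thurston straightening bound. The only cosmetic difference is that you phrase the straightening via the relative fundamental class of $(M,L)$, whereas the paper first crushes the boundary tori to circles to obtain a degree-one map $M \setminus \mathring N(L) \to M$ and then straightens a generator of $H_3(M)$; these are equivalent formulations of the same argument (and a minor nit: that $v_3$ bounds the volume of every geodesic $3$-simplex is a theorem, not the definition of $v_3$).
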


\begin{proof}
The first and last inequalities are obvious.

By Proposition~\ref{pro:X}~(2), $X=N_d(M_{\geq \mu})$ is obtained from $M$ by 
drilling out geodesics; hence by Kojima~\cite{kojima} it is a hyperbolic.
Thus the second inequality follows directly for Theorem~\ref{thm}.

The proof of third inequality is well known (see, for example, Chapter~C 
of~\cite{BenedettiPetronio}); we sketch its argument for the reader's convenience.
Let $L\subset M$ be a link and $\mathcal{T}$ a triangulation of
$M\setminus \mathring N(L)$ using $t_C(M)$ tetrahedra.
Let $\Delta$ be a 3-simplex in $\mathbb{R}^{3}$ and denote the
characteristic maps of the tetrahedra in $\mathcal{T}$ by $
\delta_{i}:\Delta \to M \setminus \mathring L$ ($i = 1,\dots,{t_C(M)}$).  
Let $f: M \setminus \mathring{N}(L) \to M$ be the degree 1 map obtained by crushing each
torus of $\del(M \setminus \mathring N(L))$ to a circle\footnote{That is, $f$ is obtained as 
following: foliate each component of $\partial M \setminus \mathring N(L)$ by circles, where each
leaf is isotopic to a meridian on $M$.  Then $f$ is obtained by identifying each leaf to a point.}.  
Denote $f  \circ \delta_i: \Delta \to M$ by $f_{i}$ ($i=1,\dots,t_{C(M)}$).  

With the notation as in the previous paragraph, we now prove the third inequality. 
Note that $\Sigma_{i=1}^{n} f_{i}$ represents a generator of $H_{3}(M) \cong \mathbb Z$.
Let $\tilde f_{i}$ be a lift of $f_{i}$ to the universal cover $\mathbb{H}^3$.
We construct a map $\bar f_{i}: \delta \to \mathbb H^{3}$ by ``pulling $\tilde f$ tight''\footnote{By 
``pulling $\tilde f$ tight'' we mean: for $p \in \Delta$ a vertex, then $\bar f(p) = \tilde f(p)$.
Next, for a general point $p \in \Delta$, $\bar f (p)$ is the unique point of $\mathbb H^{3}$ that has 
the same barycentric coordinates as $p$ (for more on barycentric coordinate see,
for example, \cite[Page 103]{hatcher}).}.  Note that $\bar f_{i}$ is homotopic to $\tilde f_{i}$; 
denote this homotopy by $\wt F_{i,t}(p)$.  Since $\Sigma_{i=1}^{n}f_{i}$ defines an
element of $H_3(M)$, faces must cancel in pairs.  Let $F$ and $F'$ be such a pair, 
that is, $f_{i}(F) = - f_{j}(F')$, and let 
$p \in F$ and $p' \in F'$ be corresponding points, that is, points with the same barycentric 
coordinates.   By preforming the homotopy at constant speed we obtain, for any $t \in [0,1]$:
$$\pi \circ \wt F_{i,t}(p) = \pi \circ \wt F_{j,t}(p').$$
Here $\pi$ is the universal cover projection.  This implies that for any $t \in [0,1]$, 
$\Sigma_{i=1}^{t_{C(M)}} \pi \circ \wt F_{i,t}(p)$  is homologous to  $\Sigma_{i=1}^{n} f_{i}$ 
and therefore represents a generator of $H_{3}(M)$.
For $t=1$, we see that $\Sigma_{i=1}^{t_{C(M)}} \pi \circ \bar f_{i}$ represents a 
generator for $H_{3}(M)$;
in particular, every point of $M$ is in the image of at least one $\pi \circ \bar f_{i}$.
Hence the sum of the volumes of the images of $\pi \circ \bar f_{i}$ is no less
than $\vol[M]$.  Using this, the fact that volumes do not increase under $\pi$, and the
fact that the volume of a hyperbolic tetrahedron is less than $v_{3}$ we get:
\begin{eqnarray*}
\vol[M] &\leq& \Sigma_{i=1}^{t_c(M)} \vol[\pi \circ \bar f_{i}] \\
             &\leq& \Sigma_{i=1}^{t_C(M)} \vol[\bar f_{i}]  \\
             &<& t_C(M) v_3.
\end{eqnarray*}
The third inequality follows.
\end{proof}

\bigskip

\noindent  The proposition below is the key to the proof of Theorem~\ref{thm} and is very 
useful in its own right.   For this proposition we need the following notation, that we will
use throughout this paper.  Fix a Margulis
constant $\mu > 0$ and $d>0$, and let $R>0$ be as in Proposition~\ref{pro:X}~(1).
We define $D = \min\{R,d\}$.
A set $A$ in a metric space is called {\it $D$-separated} if for any $p,q \in A$, 
$p \neq q$, we have that $d(p,q) > D$.
Fix $\{x_1\dots,x_N\} \subset X$ a generic set of $N$ points ({\it a-priori} $N$ may be infinite)
fulfilling the following conditions:
\begin{enumerate}
\item  $\{x_1\dots,x_N\} \subset X$  is $D$-separated.
\item  $\{x_1\dots,x_N\}$ is maximal (with respect to inclusion) subject to
  this constraint. 
\end{enumerate}
Let $V_1,\dots,V_N$ be the {\it Voronoi cells} in $M$ corresponding to  $\{x_1\dots,x_N\}$,
that is, 
$$V_i = \{p \in M |  d_M(p,x_i) \leq d_M(p,x_j) \ (j=1,\dots,N)\}.$$
We emphasize that {\it a-priori} a Voronoi cell need not be ``nice'';  
for example, it need not be a ball and may have infinite diameter.
Consider the following simple example: given any metric space and a single point in it,
the Voronoi cell corresponding to that point is the entire space.  

\begin{pro}
\label{pro:decomposition}
With the notation of Theorem~\ref{thm},there exists a constant $C = C(\mu,d)$ so that the
following holds:
	\begin{enumerate}
	\item $M$  is decomposed into $N \leq C \mbox{\rm Vol}(M)$ Voronoi cells. 
	\item $V_{i} \cap X$ is triangulated using at most $C$ tetrahedra ($i=1,\dots,N$)
	($V_{i} \cap X$ may not be connected).
	\item For any $i$, $i'$ ($i$, $i'=1,\dots,N$), the triangulations of $V_{i} \cap X$ and $V_{i'} \cap X$ given in~(2)
	above coincide on $(V_{i} \cap X) \cap (V_{i'} \cap X)$.
	\end{enumerate}
\end{pro}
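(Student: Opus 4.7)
The plan is to work locally in the universal cover. By maximality of $\{x_1,\ldots,x_N\}$, every $p \in X$ is within distance $D$ of some $x_j$, so $V_i \cap X \subset B(x_i, D)$; by Proposition~\ref{pro:X}(1), $\mbox{inj}_M(x_i) > R \geq D$, so this ball lifts isometrically to a standard ball $\widetilde B_i = B(\tilde x_i, D) \subset \mathbb{H}^3$. For part~(1), a volume packing suffices: the open balls $B(x_i, D/2)$ are pairwise disjoint by $D$-separation, and each embeds isometrically into $\mathbb{H}^3$ with fixed positive volume $v = v(D)$, so $Nv \leq \vol[M]$ and $N \leq \vol[M]/v$.

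For parts~(2) and~(3), the key is to bound the combinatorial complexity of $V_i \cap X$ inside $\widetilde B_i$ by a constant depending only on $\mu$ and $d$. Its boundary is carved by two kinds of walls. First, the \emph{Voronoi walls} are totally geodesic bisector planes between $\tilde x_i$ and lifts of other $x_j$'s lying in $B(\tilde x_i, 2D)$. Such lifts are pairwise separated by more than $D$: distinct lifts of the same $x_j$ are more than $2R \geq 2D$ apart, since any non-trivial loop at $x_j$ has length $> 2R$; and lifts of distinct $x_j$'s satisfy $d_{\mathbb{H}^3} \geq d_M > D$. A standard hyperbolic packing count in $B(\tilde x_i, 2D)$ bounds their number by some $C_1(D)$. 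Second, the \emph{$\partial X$ walls} are the boundaries of the convex lifts of components of $M \setminus X$ meeting $\widetilde B_i$, namely Margulis tubes or horoballs. The convexity established after Theorem~\ref{thm}, combined with the Margulis Lemma, gives each such component a definite geometric ``size,'' and a packing argument analogous to the first bounds their number by some $C_2(\mu, d)$. Hence $V_i \cap X$ is cut out from $\widetilde B_i$ by at most $C_1 + C_2$ walls, and the induced cell arrangement has combinatorial complexity bounded by some $C_3(\mu, d)$.

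Given this bounded-complexity arrangement, I would triangulate by a canonical barycentric subdivision: assign to every cell of the arrangement (of any dimension) an intrinsic barycenter depending only on that cell's own geometry, and form tetrahedra from flags of nested cells. The number of tetrahedra is then $\leq C(\mu,d)$, yielding~(2). For~(3), any shared face $(V_i \cap X) \cap (V_{i'} \cap X)$ lies on a bisector plane further cut by $\partial X$ walls and other Voronoi walls; because barycenters are chosen intrinsically to each face, the barycentric triangulation of this face is built from data on the face alone and so agrees from either side. The main obstacle is the second packing bound $C_2(\mu,d)$ on $\partial X$ walls: one must leverage convexity of the lifts of components of $M \setminus X$ together with the Margulis-Lemma lower bounds on tube radii and horoball depths to convert disjointness inside $\widetilde B_i$ into a hyperbolic packing count. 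Once this is in place, the canonical barycentric subdivision delivers~(2) and~(3) uniformly.
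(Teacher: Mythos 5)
Your bound on $N$ in part~(1) matches the paper's packing argument, and the counting you sketch for the Voronoi bisector walls is in the right spirit. For the ``$\partial X$ walls'' the paper proves the simpler and stronger fact (Lemma~\ref{lem:ThinInVoronoi}) that $\widetilde V_i$ meets exactly \emph{one} component of the preimage of $\mbox{cl}(M\setminus X)$, so no second packing count is needed. But your proof of parts~(2) and~(3) has a genuine gap.

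The central difficulty, which your proposal overlooks, is that the 3-dimensional piece $V_{i,j}$ is \emph{not} a ball. The paper shows (Lemma~\ref{lem:HB}) that each component $V_{i,j}$ of $V_i \cap X$ deformation retracts onto $V_{i,j} \cap \partial X$ and is therefore a handlebody, of genus bounded only by $C_1$ (Lemma~\ref{lem:genus}). Your proposed ``canonical barycentric subdivision\dots{} form tetrahedra from flags of nested cells'' implicitly treats $V_{i,j}$ as a topological 3-cell: you cone the boundary cell structure from an interior barycenter. Coning a closed genus-$g$ surface to a point yields a ball, not a genus-$g$ handlebody, so this construction does not produce a triangulation of $V_{i,j}$ when $g>0$. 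It also implicitly assumes the bounded number of walls automatically gives a bounded cell arrangement; this requires control of how the Voronoi faces meet the curved cone/horoball boundary, which is Lemma~\ref{lem:c2}~(2).

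Fixing the handlebody problem is the real work of the paper, and it is exactly what the strategy section flags (the $L(p,q)$ example). One must first cut each $V_{i,j}$ open into a ball along a bounded number of compressing disks, chosen so that the new cells they create can still be counted. Lemmas~\ref{lem:complex} and~\ref{lem:K} construct a 2-complex $K_{i,j}$ of \emph{totally geodesic} disks, built from a trivalent graph on $\partial\widetilde C$ pulled back along the retraction $f$ of Notation~\ref{notate:f}, that cuts $V_{i,j}$ into a single ball. The totally geodesic property is essential: each such disk meets each geodesic edge of $\partial V_i$ at most once, which is how the universal vertex bound $\bar{C}_0$ is obtained. Only after this cutting is the cone-from-the-center triangulation valid, and only then do the face triangulations, chosen once in $X$, match automatically across adjacent Voronoi cells to give part~(3). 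Your proposal has no mechanism for positive genus, so parts~(2) and~(3) do not follow from it.
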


We note that Theorem~\ref{thm} follows easily from Proposition~\ref{pro:decomposition}
by setting $K = C^{2}$.

\bigskip

\noindent {\bf Structure of this paper.}  In Section~\ref{sec:prelims} we cover some basic preliminary
notion.  In Section~\ref{sec:SetUp1} we describe the decomposition of $M$ into Voronoi
cells. In an attempt to make this paper
self-contained and accessible to all we provide proofs for many elementary 
facts about Voronoi cells.  In Section~\ref{sec:SetUp2} we study the intersection 
of the Voronoi cells with $X$.
In Section~\ref{sec:ProofProp} we prove Proposition~\ref{pro:decomposition}.  

\bigskip

\noindent{\bf Strategy.}  As mentioned, our approached is based on Thurston's 
original work.  However, as discussed in \cite{BenedettiPetronio} pp 190--192,
to make this work requires control over $V_{i} \cap X$.  We now briefly
explain our strategy for obtaining this control. 

We first decompose $M$ into the $N$ Voronoi cells described above.
An easy volume argument shows that $N$ is bounded above linearly in terms of
the volume of $M$.   We then show the following:
\begin{enumerate}
	\item Every component of $V_{i} \cap X$ is a handlebody, as we will show it deformation 
	retracts onto a surface contained in $\del X$.
	\item There is a universal bound on the number of components of $V_{i} \cap X$.
	\item There is a universal bound on the genus of each component of $V_{i} \cap X$.
\end{enumerate}
We obtain a certain cell decomposition of $V_{i} \cap X$.

Of course it is possible to triangulate $V_{i} \cap X$ with a bounded number of tetrahedra,
but that is not quite enough: the triangulations must agree on intersection in
order to yield a triangulation of $X$.  (Consider
a lens space: it is the union of two solid tori, but as
there are infinitely many distinct lens spaces, they require arbitrarily many tetrahedra.)
We triangulate $V_{i} \cap X$ in a way that agrees on
intersections using the cell decomposition mentioned in the
previous paragraph.

To get a bound on the number of 
tetrahedra, we observe that the faces of the  cell decomposition mentioned
above are totally geodesic.  This is used to bound the number of vertices,
which turns out to be the key for bounding the number of tetrahedra in our setting.

By contrast, when considering
the cell decomposition of the lens space $L(p,q)$ 
obtained by taking two solid tori and a meridian disk for each,
the number of vertices is not bounded; it equals 
the number of intersections between the disks, which is $p$.

\bigskip

\noindent{\bf A note on notation.}  Objects in $\mathbb{H}^{3}$
are denoted using tilde (for example, $\tilde s$ 
or $\wt C$) or using script lettering (for example, $\mathcal{A}$).  Constants denoted
by $C$ are universal (namely, $C$ as defined in Proposition~\ref{pro:decomposition} , 
$C_{3}$ as define in Lemma~\ref{lem:c3},
$C_{2}$ as define in Lemma~\ref{lem:c2}, 
$C_{1}$ as define in Lemma~\ref{lem:c1}, 
$C_{0}$ as define in Lemma~\ref{lem:c0},
and $\bar{C}_0$ as defined in Section~\ref{sec:ProofProp}).  Once defined
they are fixed for the remainder of the paper.
The constants $\mu$, $d$, $R$, and $D$ that were introduced in This section
are fixed throughout this paper.

\bigskip

\noindent{\bf Acknowledgment.}  We have benefitted from conversations and correspondences 
about Theorem~\ref{thm} with many experts and we are grateful to them all.  In particular, we thank  
Colin Adams, Joseph Maher, and Sadayoshi Kojima.  
We thank the anonymous referee for a careful reading of this paper and many helpful remarks.

\section{preliminaries}
\label{sec:prelims}

The notation of Section~\ref{sec:intro} is fixed for the remainder of this paper.

We assume familiarity with hyperbolic space $\mathbb{H}^3$ and its isometries, as 
well as the Margulis lemma.
The model of $\mathbb{H}^3$ we use is upper half space.  Given $\tilde x,\ \tilde y \in
\mathbb{H}^3$, we denote the closed geodesic segment connecting them by $[\tilde x,\tilde y]$.  
All manifolds considered are assumed to be orientable.  In a metric 
space, $N_d(\cdot)$ denotes the set of all points of distance at most $d$ from a 
given object.  The ball of radius $r$ centered at $x$ is
denoted $B(x,r)$.  The volume of a ball of radius $r$ in $\mathbb{H}^{3}$ is denoted
by $\mbox{Vol}(B(r))$.  We use the notation $\mbox{int}(\cdot)$ and
$\mbox{cl}(\cdot)$ for {\it interior} and {\it closure}.  

We fix $\mu > 0$ a Margulis constant for $\mathbb{H}^3$.  
By {\it hyperbolic manifold} $M$ we mean a complete, finite volume Riemannian
3-manifold locally isometric to $\mathbb H^{3}$.
The universal covering of a hyperbolic manifold $M$ is denoted
$\pi: \mathbb{H}^{3} \to M$; $\pi$ is called {\it the universal cover
projection}, or simply {\it the projection}, from $\mathbb H^3$ to $M$.  The {\it thick part} of $M$ is 
$$M_{[\mu,\infty)} = \{p \in M| \mbox{inj}_M(p) \geq \mu\}.$$ 
The {\it thin part} of $M$ 
is $$M_{(0,\mu]} = \mbox{cl}\{p \in M| \mbox{inj}_M(p) < \mu\} = \mbox{cl}(M \setminus M_{[\mu,\infty)}).$$
It is well known that $M = M_{(0,\mu]} \cup M_{[\mu,\infty)}$, $M_{(0,\mu]}$ is a disjoint union of
closed solid torus neighborhood of short geodesics and 
closed cusps, and
$M_{(0,\mu]} \cap M_{[\mu,\infty)}$  consists of tori.

\section{Voronoi Cells}
\label{sec:SetUp1}

Keep all notation as in the pevious sections, and recall that $N$ was the number of Voronoi cells
and $D = \min\{R,d\}$.
Since $\{x_1,\dots,x_N\}$ was chosen generically, we may assume that the Voronoi cells $\{V_{i}\}$
are transverse to each other and to $\del X$ (note that the Voronoi cells are a decomposition of $M$, 
not $X$, and $\del X \subset \mbox{int}(M)$).  
In the remainder of the paper, all our constructions
are generic and allow for small perturbation, and we always assume transversality (usually without 
explicit mention).
We bound $N$ in term of the volume of $M$:

\begin{lem}
  \label{lem:c3}
There exists a constant $C_3$ so that $N \leq C_3 \mbox{Vol}(M)$.
\end{lem}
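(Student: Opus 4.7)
The plan is a standard volume-packing argument. The idea is that since $\{x_1,\dots,x_N\}$ is $D$-separated and each $x_i$ lies in $X$, we can pack $N$ pairwise disjoint embedded metric balls of radius $D/2$ into $M$, and the universal lower bound on the volume of each such ball forces $N$ to be controlled linearly by $\vol[M]$.

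More concretely, I would proceed in three steps. First, I would verify that for each $i$ the open ball $B(x_i, D/2) \subset M$ embeds isometrically: since $x_i \in X$, Proposition~\ref{pro:X}(1) gives $\mbox{inj}_M(x_i) > R \geq D > D/2$, so $B(x_i, D/2)$ is isometric to a hyperbolic ball of radius $D/2$ in $\mathbb{H}^3$, and in particular has volume exactly $\mbox{Vol}(B(D/2))$, a positive constant depending only on $D$ (hence only on $\mu$ and $d$).

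Second, I would check that the balls $B(x_i, D/2)$ are pairwise disjoint: if some $p$ lay in $B(x_i, D/2) \cap B(x_j, D/2)$ with $i \neq j$, then by the triangle inequality $d_M(x_i, x_j) < D/2 + D/2 = D$, contradicting the $D$-separated condition on $\{x_1,\dots,x_N\}$.

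Finally, summing gives
$$N \cdot \mbox{Vol}(B(D/2)) = \sum_{i=1}^N \mbox{Vol}(B(x_i, D/2)) \leq \vol[M],$$
so setting $C_3 = 1/\mbox{Vol}(B(D/2))$ yields $N \leq C_3 \vol[M]$, as desired. There is no real obstacle here; the entire content of the lemma is that the injectivity estimate of Proposition~\ref{pro:X}(1) together with the $D$-separation hypothesis makes such a packing argument work, and the constant $C_3$ depends only on $\mu$ and $d$ through $D = \min\{R, d\}$.
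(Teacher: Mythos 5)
Your proposal is correct and follows essentially the same volume-packing argument as the paper: it uses Proposition~\ref{pro:X}(1) to get embedded balls of radius $D/2$, the $D$-separation to make them disjoint, and arrives at the identical constant $C_3 = 1/\mbox{Vol}(B(D/2))$. The only difference is that you spell out the triangle-inequality step for disjointness and the isometric-embedding step more explicitly than the paper does.
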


\begin{proof}
For each $i$, $x_i \in X$, and hence by Proposition~\ref{pro:X}~(1),
$\mbox{inj}_M(x_i) > R \geq D$.  
Since $\{x_1,\dots,x_N\}$ is $D$-separated, for $i \neq j$, $d(x_{i},x_j) > D$.
Hence $\{B(x_i,D/2)\}_{i=1}^N$ is a set of balls disjointly
embedded in $M$, each of volume $\mbox{Vol}(B(D/2))$.  Thus 
$N \leq \mbox{Vol}(M)/ \mbox{Vol}(B(D/2))$; the lemma
follows by setting  
$$C_3 = 1/\mbox{Vol}(B(D/2)).$$
\end{proof}

The preimages of $\{x_{1},\dots,x_{N}\}$ in $\mathbb{H}^{3}$ gives rise to a Voronoi
cell decomposition of $\mathbb{H}^{3}$ in a similar manner to the cells in $M$.
It is convenient to fix one of these cells for each $i$:

\begin{notate}
\label{notate:TildeVi}
	\begin{enumerate}
	\item  For each $i$, fix a preimage of $x_{i}$, denoted $\tilde x_{i}$.
	\item $\wt V_{i}$ is the Voronoi cell corresponding to $\tilde x_{i}$, that is:
		$$\wt V_{i} = \{ \tilde p \in \mathbb H^3 | d(\tilde p,\tilde x_{i}) \leq d(\tilde p, \tilde q),
		\forall \tilde q \mbox{ so that } \ \pi(\tilde q) \in \{x_{1},\dots,x_{N}\}.\}$$

	\item For each $i$, the components of $V_{i} \cap X$ are denoted by
	$V_{i,j}$ ($j=1,\dots,n_{i}$), where $n_{i}$ is the number of the components of $V_{i} \cap X$.
	\item The preimage of $V_{i,j}$ in $\wt V_{i}$ is denoted $\wt V_{i,j}$, that is:
		$$\wt V_{i,j} = \{ \tilde p \in \wt V_{i} | \pi(\tilde p) \in V_{i,j}\}.$$
	\end{enumerate}
\end{notate}

\begin{lem}
\label{lem:ProjToSame}
If $\tilde p$, $\tilde p' \in \wt V_{i}$ project to the same point $p \in V_{i}$
then $d(\tilde p, \tilde x_{i}) = d(\tilde p', \tilde x_{i})$.
\end{lem}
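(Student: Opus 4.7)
The plan is to exploit the fact that two preimages of the same point $p$ differ by a deck transformation, and that Voronoi cells in $\mathbb{H}^3$ are defined using \emph{all} preimages of the points $x_1,\dots,x_N$.

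First, since $\pi(\tilde p)=\pi(\tilde p')=p$, there is a deck transformation $g$ of the covering $\pi:\mathbb{H}^3\to M$ with $g(\tilde p)=\tilde p'$. I will then observe that $g^{-1}(\tilde x_i)$ is a preimage of $x_i$, so it belongs to the set of preimages of $\{x_1,\dots,x_N\}$ that appears in the definition of $\wt V_i$ (Notation~\ref{notate:TildeVi}(2)).

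Next, using $\tilde p\in\wt V_i$ together with the isometry property of $g$, I will estimate
\[
d(\tilde p,\tilde x_i)\;\le\;d(\tilde p,g^{-1}(\tilde x_i))\;=\;d(g(\tilde p),\tilde x_i)\;=\;d(\tilde p',\tilde x_i).
\]
The symmetric computation, applied to $\tilde p'\in\wt V_i$ with the preimage $g(\tilde x_i)$ of $x_i$, yields the reverse inequality, and the two together give equality.

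There is essentially no obstacle here: the only content is noticing that the Voronoi-cell inequality in $\mathbb{H}^3$ must hold against every lift of every $x_k$, in particular against the translates of $\tilde x_i$ by any deck transformation. The lemma is a one-line consequence once that is set up, and it is the fact that makes the map $\wt V_i\cap\pi^{-1}(V_i)\to V_i$ well-behaved for the later analysis of $V_{i,j}$.
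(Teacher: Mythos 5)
Your proof is correct and uses the same key idea as the paper's: the deck transformation carrying $\tilde p$ to $\tilde p'$ moves $\tilde x_i$ to another lift of $x_i$, which the Voronoi inequality in $\wt V_i$ must also respect. The paper phrases this as a contradiction (assume the distances differ and derive $\tilde p'\notin\wt V_i$), while you give the direct two-sided inequality, but these are the same argument.
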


\begin{proof}
Let $\tilde p$, $\tilde p'$ be points in $\wt V_{i}$ that project to the same point and
assume that $d(\tilde x_{i}, \tilde p) \neq d(\tilde x_{i}, \tilde p')$; say
$d(\tilde x_{i}, \tilde p) < d(\tilde x_{i}, \tilde p')$.
Since $\tilde p$ and $\tilde p'$ project to the same point, there is an isometry
$\phi \in \pi_{1}(M)$ so that $\phi(\tilde p) = \tilde p'$.  Let $\tilde x_{i}' = \phi(\tilde x_{i})$, 
for some $\tilde x_{i}' \in \pi^{-1}(x_{i})$.
Since $\phi$ acts freely $\tilde x_{i}' \neq \tilde x_{i}$.  We get: $d(\tilde x_{i}', \tilde p') = 
d(\phi^{-1}(\tilde x_{i}'),\phi^{-1}({\tilde p'})) = d(\tilde x_{i},{\tilde p}) <  d(\tilde x_{i}, \tilde p')$.
Hence $\tilde p' \not \in \wt V_{i}$, contradicting out assumption.  The lemma follows.
\end{proof}

In general, the distance between points in $V_{i}$ may be smaller than the distance
between their preimages in $\wt V_{i}$. 
However this is not the case when one of the points is $x_{i}$:

\begin{lem}
\label{lem:distance}
For any $\wt V_{i}$ and any $\tilde p \in \wt V_{i}$,
$ d(\tilde x_{i},\tilde p) = d(x_{i},p)$ (here $p = \pi(\tilde p)$).
\end{lem}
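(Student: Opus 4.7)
The plan is to establish the two inequalities separately. The direction $d(x_i, p) \leq d(\tilde x_i, \tilde p)$ is immediate, since the universal cover projection $\pi$ is a local isometry, and distance in $M$ is the infimum of lengths of paths, so any path in $\mathbb{H}^3$ from $\tilde x_i$ to $\tilde p$ projects to a path of equal length from $x_i$ to $p$.

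For the reverse inequality, I would lift a minimizing geodesic. Let $\gamma$ be a geodesic segment in $M$ from $x_i$ to $p$ realizing $d(x_i, p)$, and let $\tilde\gamma$ be the lift of $\gamma$ starting at $\tilde x_i$. Then $\tilde\gamma$ ends at some point $\tilde p' \in \pi^{-1}(p)$ satisfying $d(\tilde x_i, \tilde p') \leq \mathrm{length}(\tilde\gamma) = d(x_i, p)$. The key intermediate claim is that $\tilde p'$ lies in $\wt V_i$. To verify this, first observe that $p = \pi(\tilde p) \in V_i$, since $\tilde p \in \wt V_i$ implies (by taking infimum over preimages of each $x_j$) that $d(p, x_i) \leq d(p, x_j)$ for all $j$. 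Now suppose for contradiction that $\tilde p' \notin \wt V_i$; then there exists $\tilde q \in \pi^{-1}(\{x_1,\dots,x_N\})$, say with $\pi(\tilde q) = x_j$, such that $d(\tilde p', \tilde q) < d(\tilde p', \tilde x_i)$. But then
\[
d(p, x_j) \leq d(\tilde p', \tilde q) < d(\tilde p', \tilde x_i) \leq d(x_i, p) = d(p, x_i),
\]
contradicting $p \in V_i$.

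Having established $\tilde p' \in \wt V_i$, I would invoke Lemma~\ref{lem:ProjToSame} to conclude $d(\tilde x_i, \tilde p) = d(\tilde x_i, \tilde p')$. Combining this with $d(\tilde x_i, \tilde p') \leq d(x_i, p)$ and the trivial opposite inequality from the first paragraph, the lemma follows.

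The only conceptually nontrivial step is showing $\tilde p' \in \wt V_i$, and the natural obstacle there is translating the Voronoi defining condition from the cover down to the base manifold. The observation that resolves it cleanly is that distances from $p$ in $M$ are realized as infima of distances from preimages of $p$, together with the fact that $p \in V_i$ is already forced by $\tilde p \in \wt V_i$.
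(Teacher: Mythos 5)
Your proof is correct and follows essentially the same strategy as the paper's: lift a length-minimizing path from $x_i$ to $p$ starting at $\tilde x_i$, show its endpoint $\tilde p'$ lies in $\wt V_i$, and then invoke Lemma~\ref{lem:ProjToSame} to identify $d(\tilde x_i,\tilde p)$ with $d(\tilde x_i,\tilde p')$. Your treatment is in fact a touch more careful than the paper's: the paper's argument (both in the intermediate claim $\beta \subset V_i$ and in the inequality $l(\beta) \leq l(\pi([\tilde p',\tilde q]))$) tacitly uses the fact that $\pi(\tilde p) \in V_i$, which is not justified there and is only established later in Lemma~\ref{lem:voronoi}; you spell out this fact directly from the definition of $\wt V_i$ and the fact that $\pi$ does not increase distance, which closes that small gap.
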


\begin{proof}
Of all paths from $x_{i}$ to $p$ in $M$, let $\beta$ be one that minimizes length 
(note that $\beta$ need not be unique).  First we claim that
$\beta \subset V_{i}$.  Suppose, for a contradiction, that this is not the case and let
$q \in \beta$ be a point not in $V_{i}$.  Then for some $j \neq i$, $d(q,x_{j}) < d(q,x_{i})$.
By connecting the shortest path from $p$ to $q$ to the shortest path
from $q$ to $x_j$ we obtain a path strictly shorter than $\beta$, showing 
that $d(p,x_{j}) < l(\beta) = d(p,x_{i})$.  Thus $p \not\in V_{i}$, a contradiction.  Hence
$\beta \subset V_{i}$.

Let $\tilde \beta$ be the lift of $\beta$ to $\mathbb H^{3}$ starting at $\tilde x_{i}$.
Then $\tilde \beta$ is a geodesic segment, say $[\tilde x_{i},\tilde p']$, 
for some $\tilde p'$ that projects to $p$.  Fix 
$\tilde q \in \{\pi^{-1}(x_{1}),\dots,\pi^{-1}(x_{n})\}$.  Then $[\tilde p',\tilde q]$
projects to $\pi([\tilde p',\tilde q])$, a path that connects $p$ to some point of
$\{x_1,\dots,x_{n}\}$.  By choice of of $\beta$ (and since paths have the same
length as their projections), $d(\tilde x_{i},\tilde p') = l(\tilde \beta) = l(\beta) \leq l(\pi([\tilde p',\tilde q]))
= l([\tilde p',\tilde q]) = d(\tilde p',\tilde q)$.  We conclude that $\tilde p' \in \wt V_{i}$.

We see that $d(x_{i},p) = l(\beta) = l(\tilde\beta) = d(\tilde x_{i},\tilde p')$.  Since $\tilde p$, 
$\tilde p' \in \wt V_{i}$,
by Lemma~\ref{lem:ProjToSame} $d(\tilde x_{i},\tilde p) = d(\tilde x_{i},\tilde p')$;
the lemma follows.
\end{proof}

A {\it convex polyhedron} is the intersection of half spaces in $\mathbb H^{3}$. 
Note that a convex polyhedron is not required to be of
bounded diameter or finite sided (that is, the intersection of finitely many half spaces).

\begin{lem}
\label{lem:voronoi}
$\wt V_{i}$ is a convex polyhedron that projects onto $V_{i}$
\end{lem}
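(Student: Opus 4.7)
The plan is to verify the two assertions separately. For convexity, I observe that $\wt V_i$ can be rewritten as
\[
\wt V_i \;=\; \bigcap_{\tilde q \in \pi^{-1}(\{x_1,\dots,x_N\}) \setminus \{\tilde x_i\}} H(\tilde x_i, \tilde q),
\]
where $H(\tilde x_i, \tilde q) = \{\tilde p \in \mathbb H^3 \mid d(\tilde p, \tilde x_i) \leq d(\tilde p, \tilde q)\}$. In $\mathbb H^3$, the locus equidistant from two distinct points is the perpendicular bisector of the segment $[\tilde x_i, \tilde q]$, which is a totally geodesic plane; hence each $H(\tilde x_i, \tilde q)$ is a closed half-space bounded by a totally geodesic plane. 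Thus $\wt V_i$, being an intersection of such half-spaces, is a convex polyhedron (possibly neither bounded nor finite-sided, which is allowed by the definition we adopted).

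For the projection claim, I would prove the two inclusions $\pi(\wt V_i) \subset V_i$ and $V_i \subset \pi(\wt V_i)$. For the forward inclusion, take $\tilde p \in \wt V_i$ and let $p = \pi(\tilde p)$. By Lemma~\ref{lem:distance}, $d(x_i, p) = d(\tilde x_i, \tilde p)$. For any $j \neq i$ and any lift $\tilde q \in \pi^{-1}(x_j)$, the definition of $\wt V_i$ gives $d(\tilde x_i, \tilde p) \leq d(\tilde q, \tilde p)$; since $\pi$ is a local isometry that does not increase distance, taking the infimum over $\tilde q$ yields $d(x_i, p) \leq d(x_j, p)$, so $p \in V_i$.

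For the reverse inclusion, I would essentially reuse the construction from the proof of Lemma~\ref{lem:distance}: given $p \in V_i$, pick a length-minimizing path $\beta$ in $M$ from $x_i$ to $p$, which (as in that proof) must lie in $V_i$, lift it to a geodesic segment $\tilde\beta = [\tilde x_i, \tilde p']$ in $\mathbb H^3$ starting at $\tilde x_i$, and verify that $\tilde p'$ lies in $\wt V_i$ because any lift $\tilde q$ of any $x_j$ satisfies $d(\tilde p', \tilde q) \geq d(p, x_j) \geq d(p, x_i) = d(\tilde x_i, \tilde p')$. Since $\pi(\tilde p') = p$, we obtain $p \in \pi(\wt V_i)$.

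There is no significant obstacle here; the only point that requires a bit of care is making sure the inclusion $V_i \subset \pi(\wt V_i)$ genuinely produces a preimage inside $\wt V_i$ rather than merely a preimage in $\mathbb H^3$, and that is handled by lifting a minimizing path (so that the relevant distances in $\mathbb H^3$ and in $M$ agree) together with Lemma~\ref{lem:ProjToSame} to transfer the conclusion to the chosen lift if needed.
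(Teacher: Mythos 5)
Your proof is correct and follows essentially the same outline as the paper's: convexity via intersection of half-spaces, the forward inclusion via Lemma~\ref{lem:distance}, and the reverse inclusion by lifting a minimizing path from $x_i$ to $p$. The only notable difference is that you spell out the convexity argument (half-spaces bounded by perpendicular bisector planes), whereas the paper simply declares it immediate; the substance is the same.
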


\begin{proof}   It is immediate that $\wt V_{i}$ is a convex polyhedron.

Given any $\tilde p \in \wt V_{i}$, $[\tilde x_{i},\tilde p]$ is the shortest geodesic 
from $\tilde p$ to any preimage of $\{x_{1},\dots,x_{N}\}$.  The projection of 
$[\tilde x_{i},\tilde p]$ is the shortest geodesic from the projection of $\tilde p$
to $\{x_{1},\dots,x_{N}\}$.  It follows easily that $\pi(\tilde p) \in V_{i}$.  As $\tilde p$ 
was an arbitrary point of $\wt V_{i}$, we see that
$\wt V_{i}$ projects into $V_{i}$.

Conversely, given any $p \in V_{i}$, let $\beta$ be the shortest geodesic from
$\{x_{1},\dots,x_{N}\}$ to $p$.  Then $\beta$ connects $x_{i}$ to $p$.
Let $\tilde \beta$ be the unique lift of $\beta$ that starts at $\tilde x_{i}$,
and denote its terminal point by $\tilde p$.
Similar to the argument of the proof of Lemma~\ref{lem:distance}, $\tilde \beta$ is the
shortest geodesic connecting any preimage of $\{x_{1},\dots,x_{n}\}$ to
$\tilde p$, showing that $\tilde p \in \wt V_{i}$.
Hence $p$ is in the image is $\wt V_{i}$;  As $p$ was an arbitrary point of $V_{i}$, we see that
$\wt V_{i}$ projects onto $V_{i}$.
\end{proof}

%

\noindent {\bf Decomposition of $V_{i}$.} By Lemma~\ref{lem:voronoi}, the boundary
of $\wt V_{i}$ is decomposed into faces, edges and vertices.  By the same lemma,
it projects into $V_{i}$.  The images of this faces, edges and vertices from
the decomposition of $V_{i}$ that is the basis for our work in the next section.
Note that some faces of $\wt V_{i}$ are identified, and the
corresponding faces of $V_{i}$ are contained in the interior, not boundary, of $V_{i}$.
(We will show  in Lemma~\ref{lem:HB}~(3) that faces in the interior of $V_{i}$ 
are contained in $M \setminus X$, and they will play no role in our construction.)
We remark that this is not the final decomposition: in the next section we will add more faces, 
edges and vertices to the decomposition.

\section{Decomposing $X$}
\label{sec:SetUp2}

We first define:

\begin{dfn}
\label{dfn:cone}
Fix $r>0$ and a geodesic $\tilde\gamma \subset \mathbb{H}^3$.  Let $\wt C =
\{p \in \mathbb{H}^3 | d(p,\tilde \gamma) \leq r\}$.  We call $\wt C$ a {\it cone}
about $\gamma$, or simply a cone\footnote{In the upper half space model, if $\tilde \gamma$
is a Euclidean vertical straight ray from $\tilde p_{\infty}$ in the $xy$-plane,
then $\wt C$ is the cone of all Euclidean straight rays from $\tilde p_{\infty}$ that
form angle at most $\alpha$ (for some $\alpha$) with $\tilde \gamma$.
If $\tilde\gamma$ is a semicircle then $\wt C$ looks more like a banana.},
and $\tilde \gamma$ the {\it axis} of $\wt C$.
The set  $\{p \in \mathbb{H}^3| d(p,\gamma) \geq r \}$ is called {\it exterior} 
of $\wt C$, denoted $\wt E$.    
\end{dfn}

The reason we look at cones is that if $V$ is a solid torus component of $\mbox{cl}(M \setminus X)$ 
and  $\gamma$ its core geodesic, then $\pi^{-1}(V)$ is a cone and $\pi^{-1}(\gamma)$
its axis.  It can be seen directly that the intersection of a geodesic and a cone is 
a (possibly empty) connected set; hence cones are convex.  If $V$ is a cusp component of $\mbox{cl}(M \setminus X)$, then
its preimage is a horoball which is also convex.
Below, we often use the fact that the
every component of the preimage of $\mbox{cl}(M \setminus X)$  is convex.

\begin{lem}
  \label{lem:ThinInVoronoi}
For any $i$, $V_i \cap \mbox{\rm cl}(M \setminus X)$ is connected.
\end{lem}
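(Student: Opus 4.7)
The plan is to prove connectedness via a lifting argument to $\mathbb H^3$, exploiting the convexity of $\wt V_i$ (Lemma~\ref{lem:voronoi}) together with the convexity of the components of $\pi^{-1}(\mathrm{cl}(M\setminus X))$, which are cones (lifts of solid torus components, cf.\ Definition~\ref{dfn:cone}) or horoballs (lifts of cusps).

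First I would write
\[
\wt V_i \cap \pi^{-1}(\mathrm{cl}(M\setminus X)) = \bigsqcup_{\alpha} \bigl(\wt V_i \cap \tilde W_\alpha\bigr),
\]
where $\{\tilde W_\alpha\}$ runs over the components of $\pi^{-1}(\mathrm{cl}(M\setminus X))$ that meet $\wt V_i$. Each $\wt V_i \cap \tilde W_\alpha$ is an intersection of two convex sets in $\mathbb H^3$, hence convex and in particular connected, so its image $\pi(\wt V_i\cap\tilde W_\alpha)$ is a connected subset of $V_i\cap\mathrm{cl}(M\setminus X)$. Since $\wt V_i$ projects onto $V_i$ by Lemma~\ref{lem:voronoi}, we have
\[
V_i \cap \mathrm{cl}(M\setminus X) \;=\; \bigcup_{\alpha}\pi\bigl(\wt V_i\cap\tilde W_\alpha\bigr),
\]
and the task reduces to showing that this union is connected.

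My next step would be to verify that all the active components $\tilde W_\alpha$ project to a single component $U$ of $\mathrm{cl}(M\setminus X)$. If instead two of them projected to distinct components $U\neq U'$, then $V_i\cap\mathrm{cl}(M\setminus X)$ would decompose as a nonempty disjoint union of closed sets in $U$ and $U'$, which are disjoint in $M$; this would immediately contradict connectedness, so it suffices to rule it out directly. To do so, I would exploit the star-shaped structure of $V_i$ from the proof of Lemma~\ref{lem:distance}: for any $p\in V_i$ the minimizing path $\beta_p$ from $x_i$ to $p$ lies in $V_i$, and its lift starting at $\tilde x_i$ is a geodesic segment contained in $\wt V_i$. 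Applying this to points $p_k \in V_i\cap U_k$ ($k=1,2$) and tracking how the geodesics $[\tilde x_i,\tilde p_k]$ interact with the convex pieces $\tilde W_k$ — together with the maximality of the $D$-separated set $\{x_1,\dots,x_N\}$, which constrains how far $V_i$ can reach inside $X$ — should force $U_1=U_2$.

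Finally, once all active pieces project into a single $U$, I would check that any two such projections $\pi(\wt V_i\cap\tilde W_\alpha)$ and $\pi(\wt V_i\cap\tilde W_\beta)$ are linked by a path in $V_i\cap U$, using the identification of points on $\partial \wt V_i$ under $\pi$ (along bisectors between $\tilde x_i$ and its deck-translates) together with the connectedness of $U$ itself. The main obstacle I anticipate is the single-component step: this is the delicate geometric input, while the rest is formal manipulation of convex sets and their images.
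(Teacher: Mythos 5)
Your framework is the right one: lift to $\mathbb{H}^3$, use the convexity of $\wt V_i$ and of each component $\tilde W_\alpha$ of $\pi^{-1}(\mathrm{cl}(M\setminus X))$, and then argue that what remains to rule out is controlled by the maximality of the $D$-separated set. But at the crux you wave your hands: ``tracking how the geodesics $[\tilde x_i,\tilde p_k]$ interact with the convex pieces \dots should force $U_1=U_2$'' is not an argument, and this is exactly where the content of the lemma lives. The missing chain of ideas is concrete. If $\wt V_i$ met two distinct components $\tilde W_1,\tilde W_2$, take the shortest arc $\tilde\alpha\subset\wt V_i$ joining them (a geodesic segment, by convexity of $\wt V_i$). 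Its projection $\alpha$ must pass through $M_{[\mu,\infty)}$: to get from the thin component containing $\pi(\tilde W_1)$ to the thin component containing $\pi(\tilde W_2)$ one must cross the tori $M_{(0,\mu]}\cap M_{[\mu,\infty)}$ bounding them. Since $\partial X$ lies at distance $d$ from $M_{[\mu,\infty)}$ and both endpoints of $\alpha$ lie on $\partial X$, we get $l(\tilde\alpha)=l(\alpha)>2d\geq 2D$. By the triangle inequality some $\tilde p\in\tilde\alpha$ has $d(\tilde x_i,\tilde p)>D$; by Lemma~\ref{lem:distance} its projection $p$ satisfies $d(x_j,p)\geq d(x_i,p)>D$ for every $j$, so $\{x_1,\dots,x_N,p\}$ would be $D$-separated, contradicting maximality.

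A second issue is that your two-step reduction is weaker than what you need, and the second step is left unproven too. Showing that all the active $\tilde W_\alpha$ project into a single component $U$ of $\mathrm{cl}(M\setminus X)$ does not yet give connectedness: distinct lifts of the same $U$ meeting $\wt V_i$ could still produce disjoint pieces of $V_i\cap U$, and your ``finally'' paragraph (linking the pieces via bisectors) is essentially attempting to re-prove connectedness from scratch. The paper's reduction sidesteps this entirely: once you show $\wt V_i$ meets \emph{at most one} component of $\pi^{-1}(\mathrm{cl}(M\setminus X))$, the set $\wt V_i\cap\pi^{-1}(\mathrm{cl}(M\setminus X))$ is a single intersection of two convex sets, hence connected, and its continuous image $V_i\cap\mathrm{cl}(M\setminus X)$ is connected. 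There is no linking step. You should aim for this stronger statement directly; it is what the maximality argument above actually delivers.
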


\begin{proof}
The number of times $V_i$ intersects $\mbox{cl}(M \setminus X)$ is at most the
number of times $\widetilde{V}_i$ intersects the preimage of $\mbox{cl}(M
\setminus X)$.  Since $\widetilde{V}_i$ and any component of the preimage of
$\mbox{cl}(M \setminus X)$ are both convex, their intersection is connected.
Thus all we need to show is that $\widetilde{V}_i$ intersects only one
component of the preimage of $\mbox{cl}(M \setminus X)$.

Suppose this is not the case, and let $\tilde\alpha$ be the shortest arc in 
$\widetilde{V}_{i}$ that connects distinct components of the preimage of 
$\mbox{cl}(M \setminus X)$.  Since $\wt V_{i}$ is convex, 
$\tilde \alpha$ is a geodesic.  
Since $\tilde\alpha$ connects {\it distinct} components of the preimage 
of $\mbox{cl}(M \setminus X)$, some point on $\tilde \alpha$ projects
into $M_{[\mu,\infty)}$.  Let $\alpha = \pi(\tilde\alpha)$.  
Recall that the distance from $\del X$ to $M_{[\mu,\infty)}$ 
is $d$.  We conclude that $l(\tilde\alpha) = l(\alpha) > 2d \geq 2D$.  
Thus the distance between the endpoints of $\tilde\alpha$ is greater than $2D$, and by 
the triangle inequality, for some point $\tilde p \in \tilde\alpha$, 
$d(\tilde x_{i},\tilde p) > D$.  Let $p$ be the image of $\tilde p$.
By Lemma~\ref{lem:distance}, $d(x_i,p) = d(\tilde x_{i},\tilde p)$
and by Lemma~\ref{lem:voronoi} $p \in V_{i}$.
Hence by construction of the Voronoi cells, for any $j$, $d(x_{j},p) \geq d(x_{i},p) > D$.
Thus $\{x_{1},\dots,x_{n},p\} \subset X$ is a $D$-separated
set, contradicting maximality of $\{x_1,\dots,x_{N}\}$.
The lemma follows.
\end{proof}

In the  next lemma we bound the number of faces of $V_{i}$ that intersect $X$
and study that intersection:

\begin{lem}
  \label{lem:c2}
  The following two conditions hold:
  \begin{enumerate}
	\item There exists a constant $C_2$ so that for every $i$, $1 \leq i \leq N$, 
	the number of faces of $V_i$ that intersect $X$ is at most $C_2$.
  	\item For each $i$ and every face $F$ of $V_{i}$, $F \cap X$ is either empty, 
	or a single annulus, or a collection of disks. 
  \end{enumerate}
\end{lem}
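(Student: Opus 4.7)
For (1), I would argue by a packing argument in $\mathbb{H}^3$. If $F$ is a face of $V_i$ meeting $X$, lift it to a face $\wt F$ of $\wt V_i$ meeting $\pi^{-1}(X)$; such $\wt F$ is the perpendicular bisector of $\tilde x_i$ and some $\tilde y \in \pi^{-1}(\{x_1,\dots,x_N\})$ with $\tilde y \neq \tilde x_i$. For any $\tilde p \in \wt F \cap \pi^{-1}(X)$, Lemma~\ref{lem:distance} combined with the maximality of $\{x_1,\dots,x_N\}$ as a $D$-separated set in $X$ (and the fact that $\pi(\tilde p) \in V_i$) gives $d(\tilde p, \tilde x_i) = d_M(\pi(\tilde p), x_i) \leq D$. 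The bisector property then yields $d(\tilde x_i, \tilde y) \leq 2D$, so every such $\tilde y$ lies in $B(\tilde x_i, 2D)$. Two distinct preimages in $\pi^{-1}(\{x_1,\dots,x_N\})$ are always at hyperbolic distance greater than $D$ apart --- by $D$-separatedness in $M$ when they cover distinct $x_j, x_k$, and by $\mbox{inj}_M(x_j) > R \geq D$ (Proposition~\ref{pro:X}(1)) when they cover the same $x_j$. Therefore the balls $B(\tilde y, D/2)$ are pairwise disjoint and contained in $B(\tilde x_i, 5D/2)$, capping the number of such $\tilde y$'s --- and hence the number of faces of $V_i$ meeting $X$ --- by the universal constant $C_2 := \mbox{Vol}(B(5D/2))/\mbox{Vol}(B(D/2))$.

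For (2), I would exploit the convexity of the lifted picture. Let $\wt F$ be a lift of $F$ to $\wt V_i$; it is a convex polygon in a totally geodesic plane $P$. By Lemma~\ref{lem:ThinInVoronoi}, $\wt V_i$ meets at most one component $\wt T$ of $\pi^{-1}(\mbox{cl}(M\setminus X))$, and this $\wt T$ is convex (being a horoball or a cone, as noted after Definition~\ref{dfn:cone}). Hence $\wt F \cap \pi^{-1}(X) = \wt F \setminus \mbox{int}(\wt T)$. Moreover, $\pi$ is injective on the interior of $\wt F$: a coincidence $\pi(\tilde p) = \pi(\tilde p')$ for interior points would, by the argument used in Lemma~\ref{lem:ProjToSame}, force $\tilde p$ to be equidistant from three distinct preimages of the $x_k$'s and hence to lie on an edge of $\wt V_i$ --- contradicting interiority. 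Thus $F \cap X$ inherits the topology of $\wt F \setminus \mbox{int}(\wt T)$.

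It remains to classify that topology by planar convex geometry. Since $\wt F \cap \wt T$ is the intersection of two convex sets, it is convex in $P$. If $\partial \wt T \cap P$ is disjoint from $\wt F$, then $\wt F$ lies wholly inside or wholly outside $\wt T$, yielding the empty case or a single disk. If $\partial \wt T \cap P$ meets $\mbox{int}(\wt F)$ in a closed curve contained in $\mbox{int}(\wt F)$, then $\wt F \cap \wt T$ is a closed disk in the interior of $\wt F$ and the complement is an annulus. Otherwise $\partial \wt T \cap P$ meets $\wt F$ in a nonempty collection of arcs with endpoints on $\partial \wt F$; convexity of $\wt F \cap \wt T$ forces its boundary to be a single closed convex curve alternating between arcs on $\partial \wt F$ and on $\partial \wt T \cap P$, and the complement $\wt F \setminus \mbox{int}(\wt T)$ is the disjoint union of one disk per arc of $\partial \wt T \cap P$ lying in $\wt F$. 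The main obstacle is this convex-geometric case analysis, particularly when $\wt F$ is unbounded (i.e., reaches a cusp): there $\wt T$ is the corresponding horoball containing the unbounded tail of $\wt F$, and the same analysis applies to the bounded region $\wt F \setminus \mbox{int}(\wt T)$.
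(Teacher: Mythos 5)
Part (1) of your argument is essentially identical to the paper's: you bound $d(\tilde x_i,\tilde y)\leq 2D$ (the paper does this via the triangle inequality through $\tilde p$ rather than via the bisector property, but it's the same estimate) and then pack disjoint balls of radius $D/2$ into $B(\tilde x_i, 5D/2)$, arriving at the same constant.

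For part (2) you follow the same broad structure (lift, use convexity of $\wt F$ and of the component $\wt T$, classify $\wt F\setminus\mathrm{int}(\wt T)$), and your planar convex-geometry case analysis is a more detailed version of the paper's three cases. The genuine gap is in your injectivity claim. You assert that $\pi(\tilde p)=\pi(\tilde p')$ for $\tilde p,\tilde p'\in\mathrm{int}(\wt F)$ forces $\tilde p$ to be ``equidistant from three distinct preimages.'' What the Lemma~\ref{lem:ProjToSame}-style argument actually gives is that $\tilde p$ is equidistant from $\tilde x_i$, from $\tilde y$ (the center on the other side of $\wt F$), and from $\phi^{-1}(\tilde x_i)$ where $\phi\in\pi_1(M)$ sends $\tilde p$ to $\tilde p'$. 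You have $\phi^{-1}(\tilde x_i)\neq\tilde x_i$ by freeness, but nothing yet rules out $\phi^{-1}(\tilde x_i)=\tilde y$. In that degenerate case you get only two equidistant centers and no contradiction with interiority; to close it one must pass to $\tilde p'$ and use $\phi(\tilde x_i)$, and then rule out $\phi(\tilde x_i)=\tilde y$ via torsion-freeness of $\pi_1(M)$ (if $\phi$ swapped $\tilde x_i$ and $\tilde y$ then $\phi^2$ would fix $\tilde x_i$, forcing $\phi^2=\mathrm{id}$). None of this is in your proof. Moreover, you only establish injectivity on $\mathrm{int}(\wt F)$, whereas what you use (``$F\cap X$ inherits the topology of $\wt F\setminus\mathrm{int}(\wt T)$'') requires injectivity on all of $\wt F\cap\pi^{-1}(X)$, including points of $\del\wt F$. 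The paper sidesteps both issues with a simpler argument: two coincident lifts in $\wt F\cap\pi^{-1}(X)$ would produce an essential loop through $x_i$ of length $2d(x_i,p)<2D$, contradicting $\mathrm{inj}_M(x_i)>R\geq D$ from Proposition~\ref{pro:X}(1); this covers the boundary points at no extra cost and requires no case analysis on the relative position of $\tilde x_i$, $\tilde y$ and their $\phi$-translates.
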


\begin{proof}
Let $V_i$ be a Voronoi cell, $F$ a face of $V_{i}$ so that $F \cap X \neq \emptyset$,
and $p \in F \cap X$.  Let $\tilde p \in \wt V_{i}$ be a preimage of $p$ ($\tilde p$
exists by Lemma~\ref{lem:voronoi}) and let $\wt F$ be a face of $\wt V_{i}$
containing $\tilde p$. 
Let $\tilde x$ be the preimage of $\{x_{1},\dots,x_{N}\}$
that is contained in the cell adjacent to $\tilde
F$ on the opposite side from $\widetilde{V}_i$.  
By Lemma~\ref{lem:distance},  $d(\tilde x_{i},\tilde p) = d(x_{i},p)$.  Similar
to the argument of the proof of Lemma~\ref{lem:ThinInVoronoi}, maximality
of $\{x_{1},\dots,x_{N}\}$ implies that $d(x_{i},p) < D$.
We conclude that $d(\tilde x_{i},\tilde p) <D$, and similarly
$d(\tilde x,\tilde p) <D$.
By the triangle inequality, $d(\tilde x_i,\tilde x) < 2D$.    

For each face $F$ of $V_{i}$ with $F \cap X \neq \emptyset$, consider the cell
adjacent to $\wt V_{i}$ along $\wt F$ as constructed above.
The balls of radius $D/2$ centered at the preimages of $\{x_1,\dots,x_{N}\}$ in
these cells are
disjointly embedded and their centers are no further than $2D$
from $\tilde x_{i}$, so these balls are contained in $B(\tilde x_{i},2.5D)$.
Thus~(1) follows by setting
$$C_2 = \mbox{Vol}(B(2.5D))/\mbox{Vol}(B(D/2)). $$

For~(2), fix $V_{i}$ and $F$ a face of $V_{i}$.  Let $\wt F$ be the face of $\wt V_{i}$ that 
projects to $F$.  Since $\wt V_{i}$ is a convex polyhedron, $\wt F$ is a totally geodesic convex
polygon.  By Lemma~\ref{lem:ThinInVoronoi}, $\wt F$ intersects at most one component of
the preimage of $M \setminus X$, and by convexity of that component and of $\wt F$,
the intersection is either empty or a disk.  We see that one of the following holds:
	\begin{enumerate}
	\item When the intersection is empty: then the intersection of $\wt F$ with the preimage of 
	$X$ is $\wt F$ (and hence a disk).
	\item  When the intersection is a disk contained in $\mbox{int}(\wt F)$: then the intersection of $\wt F$ with the preimage of 
	$X$ is an annulus.
	\item  When the intersection is a disk not contained in $\mbox{int}(\wt F)$: then the intersection of $\wt F$ with the preimage of 
	$X$ is a collection of disks.
	\end{enumerate}

We claim that the intersection of $\wt F$ with the preimage of $X$ projects homeomorphically
onto its image.   Otherwise, there are
two points $\tilde p_{1}$, $\tilde p_{2} \in \wt F$ that project to the same point $p \in F \cap X$.
By maximality of $\{x_{1},\dots,x_{N}\}$, $d(x_{i},p) < D$.  By Lemma~\ref{lem:distance},
$d(\tilde x_{i},\tilde p_{1})$, $d(\tilde x_{i},\tilde p_{2}) = d(x_{i},p)$.
The shortest path from $\tilde p_{1}$ to $\tilde p_{2}$ that goes through $\tilde x_{i}$ projects 
to an essential closed path that contains $x_{i}$ and has length less than $2D$.  But then
$\mbox{inj}_{M}(x_{i}) < D \leq R$, contradicting Proposition~\ref{pro:X}~(1). 
Thus the intersection of the preimage of $X$ with $\wt F$ 
projects homeomorphically and ~(2) follows.
\end{proof}

We consider the intersection of an edge $e$ of $V_{i}$ with $X$.  We call the
components of $e \cap X$ {\it segments}.
In the next lemma we bound the number of segments:

\begin{lem}
  \label{lem:c1}
There exists a constant $C_1$ so that for every $i$, $1 \leq i \leq N$, the
number of segments from the intersection of edges of $V_i$ with $X$
is at most $C_1$. 
\end{lem}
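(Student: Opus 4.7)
The plan is to work inside the embedded ball $B(\tilde x_i, D)\subset\mathbb{H}^3$, which projects isometrically onto $B(x_i, D)\subset M$ because $\mbox{inj}_M(x_i)>R\geq D$ by Proposition~\ref{pro:X}(1). First I would observe that $V_i\cap X\subset B(x_i,D)$: for any $p\in V_i\cap X$, maximality of the $D$-separated set $\{x_1,\dots,x_N\}$ forces some $x_k$ with $d(p,x_k)\leq D$, and since $p\in V_i$ we obtain $d(p,x_i)\leq D$. Combined with Lemma~\ref{lem:distance}, this identifies $V_i\cap B(x_i,D)$ isometrically with $\wt V_i\cap B(\tilde x_i,D)$, moving the entire count into $\mathbb{H}^3$.

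Next I would mirror the argument of Lemma~\ref{lem:c2}(1) in the cover: every face of $\wt V_i$ meeting $\pi^{-1}(X)$ has an adjacent Voronoi center $\tilde x\in\pi^{-1}(\{x_1,\dots,x_N\})$ within distance $2D$ of $\tilde x_i$, and the associated disjoint $D/2$-balls about these centers embed into $B(\tilde x_i,2.5D)$, giving at most $C_2$ faces of $\wt V_i$ meeting $\pi^{-1}(X)$. Since every edge of $\wt V_i$ is the intersection of two of its faces, at most $\binom{C_2}{2}$ edges of $\wt V_i$ can meet $\pi^{-1}(X)$, and hence at most $\binom{C_2}{2}$ edges of $V_i$ meet $X$ (each lifts to at least one such edge of $\wt V_i$).

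Finally, for each such edge $e$, I would show that $e\cap X$ has at most two components. Via the identification above, $e\cap X$ corresponds to the intersection of a geodesic segment $\tilde e\subset\wt V_i\cap B(\tilde x_i,D)$ with $\pi^{-1}(X)$. By Lemma~\ref{lem:ThinInVoronoi}, $\wt V_i$ meets only one component $\wt K$ of $\pi^{-1}(\mbox{cl}(M\setminus X))$, and $\wt K$ is convex (a cone about a short-geodesic preimage, or a horoball). Hence $\tilde e\cap\wt K$ is a convex subset of the geodesic $\tilde e$, i.e.\ an interval (or empty), so $\tilde e\setminus\mbox{int}(\wt K)$ has at most two components; transporting back, so does $e\cap X$. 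Combining, the total number of segments is at most $2\binom{C_2}{2}=C_2(C_2-1)$, and I would set $C_1:=C_2(C_2-1)$. The main subtlety to handle carefully is that a single edge of $V_i$ may have several lifts in $\wt V_i$ through identifications on $\partial\wt V_i$; this is sidestepped precisely because $V_i\cap X$ lies in the embedded ball $B(x_i,D)$, on which $\pi$ is injective.
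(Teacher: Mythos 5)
Your proposal is correct and follows essentially the same route as the paper: both arguments bound the number of relevant edges of $\wt V_i$ by $\binom{C_2}{2}$ via pairs of faces (using convexity of $\wt V_i$ plus Lemma~\ref{lem:c2}), and both show each edge contributes at most two segments by combining Lemma~\ref{lem:ThinInVoronoi} with the convexity of the geodesic edge and of the single cone/horoball it can meet, arriving at the same constant $C_1 = C_2(C_2-1)$. Your extra observation that $V_i \cap X \subset B(x_i, D)$, which projects isometrically from $B(\tilde x_i, D)$, is a nice clarification of why lifts behave well, but it is not a genuinely different method -- the paper reaches the same conclusion via Lemma~\ref{lem:distance}, Lemma~\ref{lem:voronoi}, and Lemma~\ref{lem:HB}(3).
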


\begin{proof}
Fix $i$ and $e$ an edge of $V_i$.    We first show that $e$ contributes at
most two segment. If $e \subset X$ then it contributes exactly one segment
and if $e \cap X = \emptyset$ then it contributes no segment.  Otherwise, 
let $\tilde e$ be a lift of $e$ that is in $\widetilde{V}_{i}$.
By Lemma~\ref{lem:ThinInVoronoi}, $\tilde e$ intersects at most one component  of
the preimage of  $\mbox{cl}(M \setminus X)$.   
Since $\tilde e$ and any component of the preimage of $\mbox{cl}(M \setminus X)$ 
are both convex, their intersection is convex and hence 
connected.  Thus the intersection of $\tilde e$ and the preimage
of $\mbox{cl}(M \setminus X)$ is connected, and projecting to $M$ we see that the intersection
of $e$ and $\mbox{cl}(M \setminus X)$ is connected as well.  Thus $e$ contributes at most 2
segments.

Since $\widetilde{V}_{i}$ is convex, the intersection of 2 faces of $\del \widetilde{V}_{i}$ is
at most one edge.  Hence the number of edges is bounded above by the the
number of pairs of faces, $\frac{1}{2}C_2(C_2-1)$ (using Lemma~\ref{lem:c2}).
The number of edges of $V_{i}$ is no larger;
Lemma~\ref{lem:c1} follows by setting  
$$C_1 = C_2 (C_2-1).$$
\end{proof}

\begin{lem}
  \label{lem:c0}
There exists a constant $C_0$ so that for every $i$, $1 \leq i \leq N$, the
number of vertices of $V_i$ that lie in $X$ is at most $C_0$. 
\end{lem}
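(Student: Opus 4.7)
The plan is to bound the number of vertices of $V_i$ in $X$ by a double-counting argument against the segments bounded in Lemma~\ref{lem:c1}. I will show that every vertex of $V_i$ in $X$ is the endpoint of at least three distinct segments, and then combine this with the fact that each segment has at most two endpoints.

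First I would use the genericity of the $D$-separated set $\{x_1,\dots,x_N\}$ to arrange that no vertex of $V_i$ lies on $\del X$, so that any vertex of $V_i$ lying in $X$ actually lies in $\mbox{int}(X)$. Next, for such a vertex $v$, I would choose a lift $\tilde v \in \wt V_i$ (using Lemma~\ref{lem:voronoi}). Since $\wt V_i$ is a convex polyhedron, $\tilde v$ is incident to at least three edges of $\wt V_i$. I would then verify that the projection is a local homeomorphism near $\tilde v$ by repeating the injectivity argument of Lemma~\ref{lem:c2}(2): two lifts of $v$ in $\wt V_i$ would both lie at distance $d(x_i,v) < D$ from $\tilde x_i$ (using Lemma~\ref{lem:distance} and maximality of $\{x_1,\dots,x_N\}$), and joining them through $\tilde x_i$ would produce an essential loop at $x_i$ of length less than $2D \leq 2R$, contradicting Proposition~\ref{pro:X}(1). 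Consequently $v$ is incident in $V_i$ to at least three distinct edges, and because $v \in \mbox{int}(X)$, a neighborhood of $v$ in each of these edges lies in $X$ and therefore contributes a segment having $v$ as one of its endpoints.

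To finish, I would count pairs (vertex of $V_i$ in $X$, incident segment endpoint). If $k$ denotes the number of vertices of $V_i$ in $X$, the vertex-side count is at least $3k$, while the segment-side count is at most $2C_1$. Thus $3k \leq 2C_1$, and the lemma will hold with $C_0 = \lfloor 2 C_1 / 3 \rfloor$.

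The only real conceptual point to verify is the local injectivity of the projection at vertices in $X$, together with the resulting fact that the valence-at-least-three property passes down from $\wt V_i$ to $V_i$; since this repeats an argument already established, I do not anticipate any genuine obstacle.
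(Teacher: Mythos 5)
Your proposal is correct and is built on the same underlying idea as the paper's proof: count vertices of $V_i$ in $X$ by their incidence with the segments of Lemma~\ref{lem:c1}. The difference is one of refinement. The paper's argument is a one-liner --- each segment has at most two endpoints, each vertex of $V_i$ lying in $X$ is an endpoint of at least one segment, hence $C_0 = 2C_1$ works. You carry out a more careful double-count: by local injectivity of the projection at $\tilde v$ (which is essentially the content of Lemma~\ref{lem:HB}(3) and which you correctly re-derive in the style of Lemma~\ref{lem:c2}(2)), each vertex in $\mbox{int}(X)$ has at least three distinct incident edge-ends, each carrying a segment with $v$ as an endpoint, so $3k \leq 2C_1$ and $C_0 = \lfloor 2C_1/3\rfloor$ suffices. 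This buys a slightly better constant at the cost of invoking the injectivity argument, which the paper's proof of this particular lemma does not need. Since all that matters downstream is that \emph{some} universal constant exists, the improvement is immaterial, and the paper simply takes the cheaper route.

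One small point of care: "incident to at least three distinct edges of $V_i$" is slightly stronger than what the counting actually requires and what local injectivity directly gives. What you really use is that the three edge-germs at $\tilde v$ project to three distinct edge-germs at $v$ (a priori two of them could lie on the same edge of $V_i$ if that edge were a loop at $v$, though injectivity of the projection on $\wt V_i \cap \pi^{-1}(X)$ in fact rules this out as well). Phrasing the count in terms of (vertex, incident segment-end) pairs, as you do in your final paragraph, sidesteps this cleanly, so the argument stands.
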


\begin{proof}
Each segment contributes at most 2 vertices. Lemma~\ref{lem:c0} follows by setting  
$$C_0 = 2C_1.$$
\end{proof}

\begin{dfn}
\label{dfn:s-convex}
Let $\wt C$ be a cone, $\tilde \gamma$ its axis, and $\wt E$ its exterior
(recall Definition~\ref{dfn:cone}).  Fix $\tilde s \in \wt C$.   
We say that a set $\widetilde{K} \subset \wt E$ is {\it $\tilde s$-convex} 
if for any $\tilde p \in \widetilde{K}$,  $[\tilde p,\tilde s] \cap \widetilde{E}$ is
contained in $\widetilde{K}$.  
\end{dfn}

\begin{lem}
  \label{lem:deformation}
Let $\wt K \subset \wt E$ be an $\tilde s$-convex set (for some $\tilde s \in \wt{C}$).
Then there exists a deformation retract from $\wt K$ onto  $\wt K \cap \del \wt E$.
\end{lem}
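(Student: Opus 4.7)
The plan is to define the retract by ``sliding each point of $\wt K$ along the geodesic toward $\tilde s$ until it first meets $\del\wt E$.'' Since $\wt C$ is convex in $\mathbb{H}^3$, for every $\tilde p\in\wt K\subset\wt E$ the intersection $[\tilde p,\tilde s]\cap\wt C$ is a (possibly degenerate) geodesic subsegment containing $\tilde s$; denote by $\tilde p^{\ast}$ its endpoint closest to $\tilde p$. Equivalently, $\tilde p^{\ast}$ is the unique point of $[\tilde p,\tilde s]\cap\del\wt E$ met by travelling from $\tilde p$ toward $\tilde s$, and one has $[\tilde p,\tilde s]\cap\wt E=[\tilde p,\tilde p^{\ast}]$. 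When $\tilde p\in\del\wt E$, $\tilde p^{\ast}=\tilde p$. The hypothesis of $\tilde s$-convexity says precisely that $[\tilde p,\tilde p^{\ast}]\subset\wt K$.

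Next I would define $H:\wt K\times[0,1]\to\wt K$ by letting $H(\tilde p,t)$ be the point on $[\tilde p,\tilde p^{\ast}]$ at hyperbolic arc-length fraction $t$ from $\tilde p$. Then $H(\tilde p,0)=\tilde p$ and $H(\tilde p,1)=\tilde p^{\ast}\in\wt K\cap\del\wt E$, and whenever $\tilde p\in\wt K\cap\del\wt E$ we have $H(\tilde p,t)\equiv\tilde p$. By the previous paragraph the image of $H$ lies in $\wt K$, so $H$ is a deformation retract of $\wt K$ onto $\wt K\cap\del\wt E$ as soon as continuity is verified.

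The substantive step, and the one I expect to be the main obstacle, is continuity of the map $\tilde p\mapsto\tilde p^{\ast}$ (continuity in $t$ being obvious after arc-length parametrization). I would invoke the strict convexity of the geodesic tube $\wt C$ in $\mathbb{H}^3$, equivalently the strict convexity along geodesics of the function $\tilde p\mapsto d(\tilde p,\tilde\gamma)$. Take $\tilde s$ in the interior of $\wt C$ (the only case used in the sequel; the boundary case $\tilde s\in\del\wt C$ is easily reduced to this by shrinking the radius $r$ slightly). Then for any $\tilde p\in\wt E$ the segment $[\tilde p,\tilde s]$ meets $\del\wt C$ transversally at $\tilde p^{\ast}$, because an interior tangency would contradict strict convexity of $\wt C$. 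Transversality together with uniform convergence of the segments $[\tilde p_n,\tilde s]\to[\tilde p,\tilde s]$ when $\tilde p_n\to\tilde p$ forces $\tilde p_n^{\ast}\to\tilde p^{\ast}$, giving continuity of $H$.

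The delicate sub-case is continuity at boundary points $\tilde p\in\wt K\cap\del\wt E$, where one must rule out that a sequence $\tilde p_n\to\tilde p$ in $\mbox{int}(\wt E)$ produces crossing points $\tilde p_n^{\ast}$ far from $\tilde p$ by approaching $\del\wt C$ tangentially. The strict convexity of the tube is precisely what excludes this pathology: the segment $[\tilde p,\tilde s]$ enters the open interior of $\wt C$ immediately after leaving $\tilde p$, so for $\tilde p_n$ close to $\tilde p$ the segments $[\tilde p_n,\tilde s]$ spend almost all of their length inside $\wt C$, forcing $\tilde p_n^{\ast}$ to be close to $\tilde p$. This concludes the proof.
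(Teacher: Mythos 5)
Your proposal is correct and follows essentially the same construction as the paper: slide each point of $\wt K$ along the geodesic segment toward $\tilde s$ until it first meets $\del\wt E$, which is exactly the paper's map $\tilde p \mapsto \tilde r$. You go further by verifying continuity of the crossing-point map via strict convexity of the cone, a step the paper dispenses with as ``easy to see.''
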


\begin{proof}
Fix $\tilde p \in \wt K$.  
Since $\wt C$ is convex, $[\tilde p,\tilde s] \cap \widetilde{C}$ 
is an interval, say $[\tilde r, \tilde s]$, and $\wt E \cap [\tilde r, \tilde s] = [\tilde p,\tilde r]$.  
Since $\wt K$ is $\tilde s$-convex, $[\tilde p,\tilde r] \subset \wt K$.  We move $\tilde p$
along $[\tilde p, \tilde r]$ from its original position to $\tilde r \in\wt K \cap \del \wt E$ in
constant speed.  It is easy to see that this is a deformation retract.

\end{proof}

\begin{notate}
\label{notate:f}
With the notation of the previous lemma, we define 
$f:\wt K  \to \wt K \cap \del \wt E$ to be $f(\tilde p) = \tilde r$.
\end{notate}

Recall the definition of $\wt V_{i}$, $V_{i,j}$, and $\wt V_{i,j}$ from Notation~\ref{notate:TildeVi}.
Recall also that $n_{i}$ was the number of components of $V_{i} \cap X$ 
from  Notation~\ref{notate:TildeVi}~(3).
\begin{lem}
  \label{lem:HB}
The following conditions hold:
\begin{enumerate}
\item For each $i$, $n_{i} \leq C_0$.  
\item For each $i,$ $j$, if $\wt V_{i,j} \neq \wt V_i$, there is a
cone $\wt C$, a component of the preimage of $\mbox{cl}(M \setminus X)$, 
so that $\wt V_{i,j}$ is $\tilde s$ convex for any
point $\tilde s \in \wt V_{i} \cap \wt C$.
\item The projection of $\cup_{j=1}^{n_{i}}\wt V_{i,j}$ to 
$\cup_{j=1}^{n_{i}} V_{i,j}$ is a diffeomorphism.
\item For each $i,$ $j$, $V_{i,j}$ is a handlebody.      
\end{enumerate}

\end{lem}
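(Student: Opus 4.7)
The argument hinges on the following basic setup: by Lemma~\ref{lem:ThinInVoronoi}, when $\wt V_{i,j} \neq \wt V_i$ there is a unique component $\wt C$ of $\pi^{-1}(\mbox{cl}(M \setminus X))$ meeting $\wt V_i$ (a cone about a short geodesic, or a horoball at a cusp; in either case convex). Writing $\wt E = \mathbb{H}^3 \setminus \mbox{int}(\wt C)$ for its exterior, one has $\wt V_i \cap \pi^{-1}(X) = \wt V_i \cap \wt E$, hence $\bigcup_j \wt V_{i,j} = \wt V_i \cap \wt E$. The plan is to prove the four assertions in the order (3), (2), (4), (1).

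For (3), suppose $\tilde p \neq \tilde p'$ in $\wt V_i \cap \wt E$ project to the same point $p$. Maximality of $\{x_1,\dots,x_N\}$ forces $d(x_i,p) < D$, and by Lemma~\ref{lem:distance} $d(\tilde x_i,\tilde p) = d(\tilde x_i,\tilde p') = d(x_i,p) < D$; the path $\tilde p \to \tilde x_i \to \tilde p'$ projects to an essential closed path containing $x_i$ of length less than $2D \leq 2R$, forcing $\mbox{inj}_M(x_i) < D \leq R$ and contradicting Proposition~\ref{pro:X}(1). The restricted projection is thus an injective local diffeomorphism onto $\bigcup_j V_{i,j}$, hence a diffeomorphism; in particular each $\wt V_{i,j}$ is connected and compact. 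For (2), given $\tilde p \in \wt V_{i,j}$ and $\tilde s \in \wt V_i \cap \wt C$, convexity of $\wt V_i$ places $[\tilde p,\tilde s]$ in $\wt V_i$ and convexity of $\wt C$ makes $[\tilde p,\tilde s] \cap \wt E$ a single subinterval containing $\tilde p$; this subinterval is connected, lies in $\bigcup_j \wt V_{i,j}$, and meets $\wt V_{i,j}$, so it lies in that single component.

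For (4), the trivial case $\wt V_{i,j} = \wt V_i$ is a convex polyhedron, hence a ball, hence a handlebody. Otherwise, combining (2) with Lemma~\ref{lem:deformation} yields a deformation retract of $\wt V_{i,j}$ onto $\wt V_{i,j} \cap \partial \wt E$. Since $\partial \wt E$ is intrinsically a plane (a horosphere, or the cylindrical boundary of a tubular neighborhood of a geodesic), $\wt V_{i,j} \cap \partial \wt E$ is a planar surface with free fundamental group, so $\pi_1(V_{i,j})$ is free via the diffeomorphism from (3). Irreducibility of $V_{i,j}$ follows because any embedded $2$-sphere in $V_{i,j}$ lifts to one in $\wt V_{i,j} \subset \mathbb{H}^3$ bounding a ball which, by $\tilde s$-convexity, lies in $\wt V_{i,j}$. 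A compact orientable irreducible $3$-manifold with boundary and free fundamental group is a handlebody.

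For (1), when $\wt V_{i,j} = \wt V_i$ one has $n_i = 1 \leq C_0$. Otherwise I would show that each component $\wt V_{i,j}$ contains a vertex of $\wt V_i$ lying in $\wt E$; by (3) distinct components then give distinct vertices of $V_i$ in $X$, and Lemma~\ref{lem:c0} bounds these by $C_0$. To exhibit such a vertex, consider $h(\tilde p) = d(\tilde p,\wt C)$, which is convex on the CAT(0) space $\mathbb{H}^3$ and vanishes on $\wt C$. Its maximum on the compact three-dimensional $\wt V_{i,j}$ is strictly positive, so it is attained on $\wt V_{i,j} \cap \partial \wt V_i$ rather than on $\wt V_{i,j} \cap \partial \wt C$. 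Restricting $h$ to each totally geodesic face and then edge of $\wt V_i$ and using convexity on each cell, induction on dimension pushes the maximizer to a vertex of $\wt V_i$ inside $\wt V_{i,j}$. I expect the main obstacle to be (4): identifying $\wt V_{i,j} \cap \partial \wt E$ precisely as a planar surface and then invoking the classification of compact orientable irreducible $3$-manifolds with free fundamental group requires care, whereas the other parts reduce to direct applications of convexity.
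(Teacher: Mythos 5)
Your proof is correct and follows the paper's structure for items (2) and (3) essentially verbatim: the $\tilde s$-convexity argument via convexity of $\widetilde V_i$ and $\widetilde C$, and the injectivity-of-projection argument via the short loop through $\tilde x_i$. The genuine differences are in (1) and (4), where the paper is terse.

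For (4), the paper's route is more direct: from the deformation retract of Lemma~\ref{lem:deformation}, $\widetilde V_{i,j}$ has interval fibers over the planar surface $\widetilde P_{i,j}=\widetilde V_{i,j}\cap\partial\widetilde E$, which exhibits $\widetilde V_{i,j}$ as a handlebody without invoking any classification theorem. Your route --- compute $\pi_1$ from the retraction, prove irreducibility by lifting spheres, and invoke the classification of compact orientable irreducible $3$-manifolds with boundary and free fundamental group --- also works, but it carries two extra burdens. First, the classification theorem is a substantial result in its own right. Second, the irreducibility step as you phrase it is slightly off: $\tilde s$-convexity by itself does not directly give $B\subset\widetilde V_{i,j}$ (the geodesics from $\tilde s$ to points of $B$ need not all pass through $\widetilde V_{i,j}$ after exiting $\widetilde E$). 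What one actually uses is (a) convexity of $\widetilde V_i$, which gives $B\subset\widetilde V_i$ since $\partial B\subset\widetilde V_i$, and (b) the fact that $\mathrm{int}(\widetilde C)$ is connected and unbounded while $\partial B$ misses it, so $B\cap\mathrm{int}(\widetilde C)=\emptyset$; together these place $B$ in $\widetilde V_i\cap\widetilde E$, and connectedness puts it in the component $\widetilde V_{i,j}$. This is fixable but should be stated correctly.

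For (1), you supply a proof of the assertion that each $\widetilde V_{i,j}$ contains a vertex of $\widetilde V_i$, which the paper dismisses as ``easy to see.'' Your maximum-principle argument on $h=d(\cdot,\widetilde C)$ is sound, though the sentence ``its maximum is strictly positive, so it is attained on $\widetilde V_{i,j}\cap\partial\widetilde V_i$'' elides a step: positivity alone does not move the maximizer to the boundary; one needs that $h$ is convex and non-constant along rays from $\tilde s$, so it cannot have an interior local maximum, which then combined with $h\equiv 0$ on $\partial\widetilde C$ forces the maximizer onto $\partial\widetilde V_i$. The subsequent descent through faces and edges (using convexity of $h$ restricted to each totally geodesic cell, and again $h=0$ on $\partial\widetilde C$ to exclude that boundary) is correct.
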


\begin{proof}
Fix $i$.  It is easy to see that each component of $V_{i} \cap X$ 
must contain a vertex of $V_i$.
Applying Lemma~\ref{lem:c0} we see that there are at most $C_0$ such
components.  This establishes~(1). 

Any component of the preimge of $\mbox{cl}(M \setminus X)$ is a
cone.  Assuming that $V_{i,j} \neq V_i$, by Lemma~\ref{lem:ThinInVoronoi}
there exists a unique such component, say $\wt C$, that intersects $\wt V_{i}$.  Fix a point
$\tilde s \in \wt C \cap \wt V_{i}$.   Fix $\tilde p \in \wt V_{i,j}$.
Convexity of $\wt V_{i}$ implies that $[\tilde p, \tilde s] \subset \wt V_{i}$.  Convexity
of $[\tilde p, \tilde s]$ and $\wt C$ implies that $[\tilde p, \tilde s]  \cap \wt C$ is an
interval, say $[\tilde r, \tilde s]$, and therefore 
$[\tilde p,\tilde s] \cap (\wt V_{i} \setminus \mbox{int}\wt C) = [\tilde p, \tilde r]$. 
Since $\wt V_{i,j}$ is connected,
$[\tilde p,\tilde s] \cap \wt V_{i,j} = [\tilde p,\tilde r] \subset \wt V_{i,j}$.
This establishes~(2).

For~(3), it is easy to see that all we need to show is that the projection 
$\cup_{j=1}^{n_{i}} \wt V_{i,j} \to \cup_{j=1}^{n_{i}} V_{i,j}$ is one-to-one.  
Assume not (this is similar to Lemma~\ref{lem:c2}~(2)); then there exist $\tilde p_{1}, \ \tilde p_{2}
\in \cup_{j=1}^{n_{i}} \wt V_{i,j}$ that project to the same point $p \in \cup_{j=1}^{n_{i}} V_{i,j}$.   
Then the shortest path from $\tilde p_{1}$ to $\tilde p_{2}$ that goes through $\tilde x_{i}$ projects 
to an essential closed path that contains $x_{i}$, and has length less than $2D$.  But then
$\mbox{inj}_{M}(x_{i}) < D \leq R/2$, contradicting Proposition~\ref{pro:X}~(1). 
This establishes~(3).

If $V_i \subset X$ then $V_i \cap X$ is a ball and~(4) follows.  Otherwise,~(4)
follows from~(2), Lemma~\ref{lem:deformation}, and~(3).
\end{proof}

We denote $V_{i,j} \cap \del X$ by $P_{i,j}$.  We bound $g(V_{i,j})$, the genus of $V_{i,j}$: 

\begin{lem}
  \label{lem:genus}
$g(V_{i,j}) \leq C_1$.
\end{lem}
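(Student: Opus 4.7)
The plan is to replace $V_{i,j}$ by a homotopy equivalent subsurface $P_{i,j}\subset\partial X$ and then bound $\mathrm{rank}\,H_1(P_{i,j})$ by counting boundary circles against the Voronoi decomposition. First I would dispose of the trivial case $V_{i,j}=V_i$ (in which $V_{i,j}$ is a convex ball, so $g=0$), and assume otherwise. By Lemma~\ref{lem:ThinInVoronoi} there is a unique component of $\mathrm{cl}(M\setminus X)$ meeting $V_i$; lifting this component appropriately yields a cone $\wt C$ adjacent to $\wt V_i$, and choosing $\tilde s\in\wt V_i\cap\wt C$, Lemma~\ref{lem:HB}~(2) supplies the required $\tilde s$-convexity of $\wt V_{i,j}$. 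Applying Lemma~\ref{lem:deformation} then produces a deformation retraction of $\wt V_{i,j}$ onto $\wt V_{i,j}\cap\partial\wt C$, which descends via the diffeomorphism of Lemma~\ref{lem:HB}~(3) to a deformation retraction of $V_{i,j}$ onto $P_{i,j}$. In particular $g(V_{i,j})=\mathrm{rank}\,H_1(V_{i,j})=\mathrm{rank}\,H_1(P_{i,j})$.

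Next, I would use the ambient topology. Since $\partial X$ is a union of tori and, by Lemma~\ref{lem:ThinInVoronoi}, $P_{i,j}$ is contained in a single such torus, its genus $h(P_{i,j})$ is at most $1$. Because $V_{i,j}$ is a handlebody by Lemma~\ref{lem:HB}~(4), $\pi_1(P_{i,j})$ is free, which forces $P_{i,j}$ to have nonempty boundary (no closed subsurface of a torus has free fundamental group). Writing $b(P_{i,j})$ for the number of boundary circles, the standard formula yields $g(V_{i,j})=2h(P_{i,j})+b(P_{i,j})-1\leq b(P_{i,j})+1$.

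The main obstacle is then the combinatorial step of bounding $b(P_{i,j})$. I would count the boundary circles of $P_{i,j}$ against the $1$-skeleton of $\partial V_i$: each such circle either (i) lies entirely in the interior of a single face $F$ of $V_i$---equivalently, $F\cap X$ is an annulus in the sense of Lemma~\ref{lem:c2}~(2)---or (ii) crosses at least two points of $\bigcup_e(e\cap\partial X)$ transversally. Circles of type (i) are bounded by the number of faces intersecting $X$, hence by $C_2$. For type (ii), the proof of Lemma~\ref{lem:c1} bounds the number of edges by $C_1/2$, and each edge contributes at most two points to $\partial X$, so a double-count gives at most $C_1/2$ such circles. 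Combining the estimates yields $b(P_{i,j})\leq C_1/2+C_2$, and absorbing the lower-order $C_2+1$ into $C_1=C_2(C_2-1)$ (taking $C_1$ slightly larger if necessary) delivers the claimed bound $g(V_{i,j})\leq C_1$. The delicate point is to verify that transversality and the convexity of $\wt V_i$ actually force the dichotomy (i)/(ii) to be exhaustive, and to rule out configurations in which a single edge-point could be charged to multiple boundary circles.
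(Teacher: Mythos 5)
Your argument is correct but takes a genuinely different route from the paper's. Both proofs begin the same way, using Lemmas~\ref{lem:HB} and~\ref{lem:deformation} to deformation-retract $V_{i,j}$ onto $P_{i,j}$. From there, however, the paper works with the complementary surface $\mathrm{cl}(\partial V_{i,j}\setminus P_{i,j})$, observes it is a $g(V_{i,j})$-times punctured disk, and computes its Euler characteristic $1-g(V_{i,j})$ directly from the face decomposition: the faces are disks or annuli (so $\sum\chi(F_k)\geq 0$), the boundary edges and vertices cancel ($e_{\partial}=v_{\partial}$), and the interior edges are exactly the segments, so $e_{int}\leq C_1$ by Lemma~\ref{lem:c1}. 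You instead pass through the relation $g(V_{i,j})=2h(P_{i,j})+b(P_{i,j})-1$ and bound $b(P_{i,j})$ by classifying its boundary circles. Your dichotomy is sound: the points of $\bigcup_e(e\cap\partial X)$ all have valence two in the one-complex $\partial V_i\cap\partial X$, with the two arc-germs at such a point lying in the two distinct faces adjacent to that edge, so a circle that meets any such point must meet at least two and no edge-point can be charged to two distinct circles; and Lemma~\ref{lem:c2}~(2) guarantees each face supplies at most one interior circle. (Your appeal to the handlebody structure to rule out a closed $P_{i,j}$ also works, though one can see it more directly: $\widetilde P_{i,j}$ lies in $\partial\widetilde C$, which is a plane, and any nonempty compact subsurface of a plane has boundary.) The one cosmetic issue is that your bound comes out as $C_1/2+C_2+1$, which is not literally $\leq C_1=C_2(C_2-1)$ when $C_2$ is small; you flag this yourself, and since the constant is immaterial to the paper's use of the lemma (it only feeds into $C$ and then $K=C^2$), absorbing it by enlarging $C_1$ is harmless. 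In exchange for the slightly less tidy constant, your approach has the merit of giving an explicit geometric picture of where the boundary circles of $P_{i,j}$ come from, whereas the paper's Euler-characteristic computation is shorter but more opaque.
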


\begin{proof}
If $V_{i,j} = V_{i}$ then it is a ball and there is nothing to show.  Assume this
is not the case.  Then by Lemmas~\ref{lem:HB} and ~\ref{lem:deformation} 
$V_{i,j}$ deformation retracts onto
$P_{i,j}$.  Hence $\mbox{cl}(\del V_{i,j} \setminus P_{i,j})$ is homeomorphic 
to $P_{i,j}$, and is a
$g(V_{i,j})$-times punctured disk.  The faces of $V_i$ induce a decomposition
on $\mbox{cl}(\del V_{i,j} \setminus P_{i,j})$.  By Lemma~\ref{lem:c2}~(2),
each face of $\mbox{cl}(\del V_{i,j} \setminus P_{i,j})$ is a disk or 
an annulus; in
particular the Euler characteristic of each such component is non-negative.
Denote the faces of $\mbox{cl}(\del V_{i,j} \setminus P_{i,j})$ by $\{F_k\}_{k=1}^{k_{0}}$, the number of
edges by $e$, and the number of vertices by $v$.  Note further, that the edges of 
$\mbox{cl}(\del V_{i,j} \setminus P_{i,j})$ come in two types, edges
in the interior of $\mbox{cl}(\del \wt V_{i,j} \setminus P_{i,j})$
(say $e_{int}$ of them) and edges on its boundary
(say $e_{\del}$ of them).  Similarly, $v_{int}$ (resp. $v_{\del}$)
denotes the number of vertices in the interior (resp. boundary) of $\mbox{cl}(\del \wt V_{i,j} \setminus P_{i,j})$.
Since  the boundary of $\mbox{cl}(\del V_{i,j} \setminus P_{i,j})$  consists of circles,
$e_{\del} = v_{\del}$.  Since $e_{int}$ is the number of segments on $V_{i,j} \cap X$,
by Lemma~\ref{lem:c1}, $e_{int} \leq C_{1}$.  An Euler characteristic calculation gives:

\begin{align*}
   1 - g(V_{i,j}) & = \chi(\mbox{cl}(\del V_{i,j} \setminus P_{i,j})) & \mbox{\rm since }
   		\mbox{cl}(\del V_{i,j} \setminus P_{i,j}) \mbox{ \rm is a } g(V_{i,j})\mbox{\rm -times 
		 disk}\\
   & = (\Sigma_{k=1}^{k_{0}} \chi(F_k)) -e +v \\
   & = (\Sigma_{k=1}^{k_{0}} \chi(F_k)) -(e_{int} + e_{\del}) +(v_{int} + v_{\del}) \\
   & = (\Sigma_{k=1}^{k_{0}} \chi(F_k)) - e_{int} +v_{int} & e_{\del} = v_{\del} \\
   & \geq  - e_{int} + 1 &  \Sigma_{k=1}^{k_{0}} \chi(F_k) \geq 0  \mbox{ \rm and } v_{int} > 0 \\
   & \geq -C_1 + 1. & \mbox{\rm Lemma~\ref{lem:c1}}
\end{align*}
The lemma follows.
\end{proof}

We use the notation $\del_{\infty}$ for the limit points at infinity.  In particular, if $\wt L$
is a totally geodesic plane then $\del_{\infty}\wt L$ 
is a simple closed curve and if $\wt C$ is a cone then 
$\del_{\infty}\wt C$ consists of two points.

\begin{lem}
  \label{lem:arc}
Let $\wt C$ be a cone, $\tilde s \in \wt C$ a point not on the axis of $\wt C$.  Let 
$\tilde\beta \subset \del \wt C$ be an arc and $\tilde p \in \tilde\beta$,
$\tilde q \in \del \wt C$ points.  Assume that the totally geodesic plane
that contains $\tilde s$ and $\del_{\infty} \wt C$ intersects $\tilde\beta$ in
a finite set of points.

Then after an arbitrarily small perturbation of $\tilde p$ in $\tilde \beta$, there exists a
closed arc $\tilde\alpha \subset \del \wt C$ connecting  $\tilde p$ and $\tilde q$, so 
that $\tilde \alpha$ and $\tilde s$ are contained in a totally geodesic plane.
\end{lem}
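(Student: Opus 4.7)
Let $P_{0}$ denote the totally geodesic plane appearing in the hypothesis, i.e.\ the plane containing $\tilde s$ and $\del_{\infty}\wt C$. Since $\del_{\infty}\wt C = \del_{\infty}\tilde\gamma$ consists of the two endpoints at infinity of the axis $\tilde\gamma$, and since $\tilde s\notin\tilde\gamma$, the plane $P_{0}$ is in fact the unique totally geodesic plane through $\tilde s$ that contains $\tilde\gamma$ (the planes containing $\tilde\gamma$ form a pencil, and exactly one passes through $\tilde s$). The plan is to let $P$ be the totally geodesic plane spanned by $\tilde s,\tilde p,\tilde q$, perturb $\tilde p$ slightly so as to ensure $P\neq P_{0}$, and then take $\tilde\alpha$ to be a component of $P\cap \del\wt C$ joining $\tilde p$ to $\tilde q$. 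The finite-intersection hypothesis is exactly what is needed to make the perturbation step work.

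First I would perturb $\tilde p$ within $\tilde\beta$: by hypothesis $P_{0}\cap\tilde\beta$ is a finite set, so an arbitrarily small displacement of $\tilde p$ along $\tilde\beta$ can be chosen so that $\tilde p\notin P_{0}$. By choosing the perturbation generically I may further arrange that $\tilde p,\tilde q,\tilde s$ are not collinear (this only requires avoiding one additional point of $\tilde\beta$, namely its intersection with the geodesic through $\tilde q$ and $\tilde s$). Let $P$ be the unique totally geodesic plane through $\tilde p,\tilde q,\tilde s$. Since $\tilde s\in P_{0}\cap P$ but $\tilde p\in P\setminus P_{0}$, we have $P\neq P_{0}$, and by uniqueness of $P_{0}$ as the plane through $\tilde s$ containing $\tilde\gamma$, this forces $\tilde\gamma\not\subset P$.

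The core of the argument is a convexity statement about $P\cap \wt C$. Since $\wt C$ and $P$ are both convex in $\mathbb{H}^{3}$, the set $P\cap \wt C$ is a closed convex subset of the hyperbolic plane $P$, and its limit set at infinity equals $\del_{\infty}P\cap\del_{\infty}\wt C$. Because $\tilde\gamma\not\subset P$, the boundary circle $\del_{\infty}P$ cannot pass through both endpoints of $\tilde\gamma$, so this limit set contains at most one point. A closed convex subset of $\mathbb{H}^{2}$ whose ideal limit set has at most one point has connected boundary: if compact, the boundary is a topological circle, and if it has a single ideal end the boundary is a topological line approaching that ideal point at both ends. In particular $P\cap\del\wt C = \del(P\cap\wt C)$ is connected, so we may take $\tilde\alpha\subset P\cap\del\wt C$ to be any closed arc joining $\tilde p$ and $\tilde q$; then $\tilde\alpha\subset\del\wt C$ and $\tilde\alpha\cup\{\tilde s\}\subset P$, as required.

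The main obstacle is the planar convexity statement invoked in the previous paragraph, namely that a closed convex subset of $\mathbb{H}^{2}$ with at most one ideal limit point has connected boundary. The only way this can fail is the "strip" configuration, in which the boundary consists of two disjoint complete curves; such a strip necessarily has at least two distinct ideal boundary points. Translating back, $P\cap\wt C$ is a strip exactly when $\del_{\infty}P\supset \del_{\infty}\wt C$, i.e.\ exactly when $P=P_{0}$. This is precisely why the hypothesis that $P_{0}\cap\tilde\beta$ be finite appears in the statement: it is exactly what allows us to perturb $\tilde p$ off $P_{0}$ and thereby avoid the strip case.
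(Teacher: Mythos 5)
Your proof is correct and follows essentially the same approach as the paper's: perturb $\tilde p$ along $\tilde\beta$ to ensure the plane $P$ through $\tilde p,\tilde q,\tilde s$ does not contain the axis (i.e.\ $P\neq P_0$), then argue that $P\cap\del\wt C$ is connected and take an arc in it. The paper splits this into two cases and simply asserts that $\wt L\cap\del\wt C$ is connected when $\del_\infty\wt C\not\subset\del_\infty\wt L$ ("the reader can easily verify"); your convexity argument --- identifying $P\cap\wt C$ as a closed convex subset of a hyperbolic plane with at most one ideal limit point, hence connected boundary --- supplies the justification for that step.
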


\begin{rmk}
The condition on $\tilde\beta$ is generic, and since we allow 
for small perturbations in our 
construction we will always assume it holds.
\end{rmk}

\begin{proof}
Let $\wt L$ be a totally geodesic plane containing $\tilde p$,
$\tilde q$, and $\tilde s$.  We prove Lemma~\ref{lem:arc} in two cases:



\medskip

\noindent{\bf Case One.}  $\del_{\infty} \wt C \not\subset \del_{\infty} \wt L$.
The reader can easily verify that in this case  $\wt L \cap \del \wt C$ is connected.
Then we take the arc $\tilde \alpha$ 
to be a component of $\wt L 
\cap \del \wt C$ that connects $\tilde p$ to $\tilde q$.  

\medskip

\noindent{\bf Case Two.}   $\del_{\infty} \wt C \subset \del_{\infty} \wt L$.  Equivalently, 
$\tilde\gamma \subset \wt L$, where  $\tilde\gamma$ denotes the axis of $\wt C$.
Since $\tilde s \not\in \tilde\gamma$, $\wt L$ is the {\it unique}
totally geodesic plane that contains both $\tilde s$ and $\tilde \gamma$.  
By assumption, $\tilde\beta \cap \wt L$ is a finite set.  
By perturbing $\tilde p$ slightly in $\tilde\beta$ we reduce the problem to Case One.
The lemma follows.
\end{proof}

In the following lemma we construct the main tool we will use for cutting $V_{i,j}$ into balls.
In that lemma, $\wt C$ is a cone and $\tilde s \in \wt C$ a point.
A collection of simple closed curves on $\del \wt C$ is called {\it generic} if 
it intersects any totally geodesic plane containing $\tilde s$ in a finite collection of points.
As remarked after Lemma 4.11, we will always assume it holds.

\begin{lem}
  \label{lem:complex}
Let $\wt C$ be a cone, $\tilde s \in \wt C$ a point
not on the axis of $\wt C$, and 
$\mathcal{C} \subset \del \wt C$ a collection of $n+1$ disjoint generic simple closed curves, for some $n \geq 0$.  

Then there exists a graph $\mathcal{A} \subset \del \wt C$ with the following properties:
	\begin{enumerate}
	\item $\mathcal{A}$ has at most $2n-1$ edges.
	\item For every edge $\tilde e$ of $\mathcal{A}$, $\tilde e$ and $\tilde s$ are contained in a single totally 
	geodesic plane.
	\item $\mathcal{C} \cup \mathcal{A}$ is a connected trivalent graph. 
	\item  The graph obtained by removing any edge of $\mathcal{A}$ from 
	$\mathcal{A} \cup \mathcal{C}$ is disconnected.
	\end{enumerate}
\end{lem}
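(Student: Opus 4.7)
The plan is to build $\mathcal{A}$ iteratively via $n$ merging steps, starting from $\mathcal{A}_0=\emptyset$ (so that $\mathcal{C}\cup\mathcal{A}_0$ has $n+1$ components) and reducing the component count by one at each step. At the $k$-th step I would select two components $\Gamma,\Gamma'$ of $\mathcal{C}\cup\mathcal{A}_{k-1}$, pick a point $\tilde p$ on a circle of $\mathcal{C}$ contained in $\Gamma$ (always possible since every component contains one of the original $n+1$ circles) and a point $\tilde q\in\Gamma'$, and apply Lemma~\ref{lem:arc} to obtain an arc $\tilde\alpha\subset\del\wt C$ from $\tilde p$ to $\tilde q$ lying in a totally geodesic plane through $\tilde s$. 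I would then let $\tilde\alpha'$ be the initial sub-arc of $\tilde\alpha$ running from $\tilde p$ to the first point $\tilde r$ at which $\tilde\alpha$ meets $\mathcal{C}\cup\mathcal{A}_{k-1}$ beyond $\tilde p$ itself. The case $n=0$ is trivial with $\mathcal{A}=\emptyset$.

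The edge-count analysis at the $k$-th step splits into two cases. If $\tilde r$ lies on a circle of $\mathcal{C}$, set $\mathcal{A}_k=\mathcal{A}_{k-1}\cup\{\tilde\alpha'\}$, adding exactly one edge; the circle contributes valence $2$ at $\tilde r$ and $\tilde\alpha'$ contributes valence $1$, yielding a trivalent vertex. If $\tilde r$ lies in the interior of an existing edge $\tilde e\in\mathcal{A}_{k-1}$, split $\tilde e$ at $\tilde r$ into $\tilde e_1,\tilde e_2$ and set $\mathcal{A}_k=(\mathcal{A}_{k-1}\setminus\{\tilde e\})\cup\{\tilde e_1,\tilde e_2,\tilde\alpha'\}$, a net addition of $2$ edges; the two halves $\tilde e_1,\tilde e_2$ remain in the same totally geodesic plane through $\tilde s$ as $\tilde e$, so condition (2) persists, and the vertex at $\tilde r$ again has degree $3$. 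Because $\mathcal{A}_0=\emptyset$ the first step necessarily falls into the first case and adds only one edge, so after $n$ steps
$$|\mathcal{A}_n|\leq 1+2(n-1)=2n-1,$$
establishing (1). Condition (3) holds by induction: the number of components decreases by exactly one at each step, and truncation at the first crossing prevents any vertex of valence greater than $3$. Condition (4) holds because each newly added arc is a bridge (it joins two previously disconnected components, and the construction never closes a cycle through $\mathcal{A}$), and subdividing a bridge preserves the bridge property of both resulting pieces.

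The main obstacle is arranging, at each step, that the first crossing point $\tilde r$ lies on a component strictly different from $\Gamma$---otherwise $\tilde\alpha'$ would be a useless loop based on $\Gamma$ that merges no new components---and that $\tilde r$ is not already a vertex of $\mathcal{C}\cup\mathcal{A}_{k-1}$ (which would raise its valence above $3$). Both issues I would resolve by exploiting the perturbation freedom for $\tilde p$ in Lemma~\ref{lem:arc} together with the genericity of $\mathcal{C}$: for a generic choice of $\tilde p\in\Gamma$ and $\tilde q\in\Gamma'$, the totally geodesic plane through $\tilde p,\tilde q,\tilde s$ is in general position with respect to $\mathcal{C}\cup\mathcal{A}_{k-1}$, so the first crossing of $\tilde\alpha$ beyond $\tilde p$ hits a different component transversely at a non-vertex point.
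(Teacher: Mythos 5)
Your approach — building $\mathcal{A}$ by iteratively merging components, one arc per step, starting from $\mathcal{A}_0 = \emptyset$ — is genuinely different from the paper's, which inducts on $n$ by deleting one circle, applying the hypothesis, reinserting the circle, and pruning. Your route is attractive, but it has a real gap exactly at the point you flag as the ``main obstacle.''

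The problem is that the first crossing $\tilde r$ of $\tilde\alpha$ beyond $\tilde p$ landing on the same component $\Gamma$ is a \emph{topological} phenomenon, not a degenerate one, and therefore cannot be removed by the genericity/perturbation freedom in Lemma~\ref{lem:arc}. Concretely, if the circle $c\subset\mathcal{C}$ containing $\tilde p$ bounds a disk $D\subset\del\wt C$ with $\tilde q\notin D$, and the arc $\tilde\alpha$ leaves $\tilde p$ into $D$, then $\tilde\alpha$ must exit $D$ by re-crossing $c$ before it can possibly reach $\tilde q$; an arbitrarily small perturbation of $\tilde p$ along $c$ (which is all Lemma~\ref{lem:arc} permits) does not change the side of $c$ into which $\tilde\alpha$ initially travels. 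The natural repair is to replace ``first crossing'' by ``a maximal sub-arc of $\tilde\alpha$ with interior disjoint from $\mathcal{C}\cup\mathcal{A}_{k-1}$ whose two endpoints lie on different components'' (such a sub-arc exists since $\tilde\alpha$ joins $\Gamma$ to $\Gamma'$), but then \emph{both} endpoints of the sub-arc may lie in the interiors of edges of $\mathcal{A}_{k-1}$, forcing two edge-splits plus one new arc, i.e.\ a net of $+3$ edges in a single step. This breaks the $1 + 2(n-1)$ arithmetic on which your bound rests. The estimate $2n-1$ is in fact still true for your final graph, but only for the global reason the paper uses: once~(3) and~(4) hold, collapsing each component of $\mathcal{C}$ to a point yields a tree with $n+1$ vertices of arbitrary valence and all other vertices trivalent, and a separate tree-counting claim bounds the edges by $2n-1$. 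Your proof never invokes this tree argument, and the per-step count you substitute for it is not valid. To salvage your construction you would need to add that final counting claim (or an equivalent), rather than rely on perturbation to keep every step at $+2$.
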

  
\begin{proof}

We induct on $n$.  If $n=0$ there is nothing to prove.

Assume $n>0$ and let $\tilde c$ be a component of $\mathcal{C}$.  Let $\mathcal{C}' = \mathcal{C}
\setminus \tilde c$.  By the induction hypothesis, there exists a graph $\mathcal{A}' \subset \del \wt C$
with at most $2n-3$ edges, so that every edge of 
$\mathcal{A}'$ is contained in a totally geodesic plane that contains $\tilde s$,
and $\mathcal{C}' \cup \mathcal{A}'$ is a connected trivalent graph.

\medskip

\noindent{\bf Case One.}  $\tilde c \cap (\mathcal{C}' \cup \mathcal{A}') = \emptyset$.
Fix $\tilde p \in  \tilde c$ and $\tilde q \in \mathcal{C}' \cup \mathcal{A}'$
so that $\tilde q$ is not a vertex.  
Since $\tilde c$ is generic, by Lemma~\ref{lem:arc} 
after a small perturbation of $\tilde p$ in $\tilde c$,
there exists an arc $\tilde \alpha'$ connecting $\tilde p$
and $\tilde q$ so that $\tilde \alpha'$ and $\tilde s$ are contained in a totally
geodesic plane.   Since the perturbation was generic we may assume that $\tilde
\alpha'$ is transverse to $\mathcal{C}' \cup \mathcal{A}' \cup \tilde c$.

Let $\tilde \alpha$ be a component of $\tilde \alpha'$ cut open along 
the points of $\tilde \alpha' \cap (\mathcal{C}' \cup \mathcal{A}' \cup \tilde c)$ that connects
$\tilde c$ to $\mathcal{C}' \cup \mathcal{A}'$.  Since the perturbation was
generic we may assume that the endpoint of $\tilde\alpha$ on $\mathcal{A}' \cup 
\mathcal{C}'$ is not a vertex,
so that $\mathcal{C} \cup \mathcal{A}' \cup \tilde \alpha$ is a connected trivalent
graph.  The lemma follows in Case One by setting $\mathcal{A} = \mathcal{A}' \cup
\tilde \alpha$.

\medskip

\noindent{\bf Case Two.} $ \tilde c \cap (\mathcal{C}' \cup \mathcal{A}') \neq \emptyset$.  
Let $\mathcal{A}''$ be the graph obtained from $\mathcal{A}'$ by adding a vertex at every point of 
$\mathcal{A}' \cap \tilde c$.  Note that there is no bound on the number of
edges of $\mathcal{A}''$, and the the vertices of $\mathcal{C} \cup \mathcal{A}''$ have valence 3
or 4 (the vertices of valence 4 are $\mathcal{A} ' \cap \tilde c$).  
Clearly, $\mathcal{C} \cup \mathcal{A}''$ is connected.   

\smallskip

\noindent {\bf Step One.}  Let $\tilde e$ be an edge of $\mathcal{A}''$ so that the graph obtained by
removing $\tilde e$ from $\mathcal{C} \cup \mathcal{A}''$ is connected.  
We remove $\tilde e$.   

\smallskip

\noindent{\bf Step Two.}  Note that as after Step One we may have a vertex, say $\tilde v$,
of valence 2.
Let $\tilde e_i$ ($i=1,2$) be the other two edges incident to $\tilde v$;
denote the endpoints of $\tilde e_{i}$ by $\tilde v$ and $\tilde v_i$.  
We remove $\tilde e_{1}$ and $\tilde e_{2}$.  If the graph obtained 
is disconnected, then it consists of  two components, one containing $\tilde v_{1}$ and one
containing $\tilde v_{2}$.  As in Case One, we construct an arc to connect the two components.

Step Two may produce a new vertex of valence 2.
We iterate Step Two.  This process
reduces the number of edges and so terminates; when it does, we obtain
a connected graph with no vertices of valence 2.   

We now repeat Step One (if possible).  After every application of Step One we repeat
Step Two (if necessary).  Step One also reduces the number of edges, so it terminates.  
When it does, we obtain a graph
(still denoted $\mathcal{A}''$) so that
$\mathcal{C} \cap \mathcal{A}'' $ is connected, but removing any edge of $\mathcal{A}''$ 
disconnects it.

By construction, the vertices of $\mathcal{C} \cup \mathcal{A}''$ 
have valence 3 or 4.  If a vertex has valence
4, we choose an edge adjacent to it from $\mathcal{A}''$.  Perturbing the endpoint of
this edge and applying Lemma~\ref{lem:arc}, 
we obtain two vertices of valence 3.  We iterate
this process.  Since this process reduces the number of vertices of valence 4, it will
terminate.  The graph obtained is denoted $\mathcal{A}$.

By construction, conditions~(2),~(3) and~(4) of Lemma~\ref{lem:complex} hold.  All that
remains is proving:

\medskip

\noindent{\bf Claim.}  $\mathcal{A}$ has at most $2n-1$ edges.

\medskip

\noindent{Proof of claim:} Let $\Gamma$ be the graph obtained from $\mathcal{C} \cup \mathcal{A}$
by identifying  every component of  $\mathcal{C}$ to a single point.
Note that the vertex set of $\Gamma$ has $n+1$ vertices that correspond
to the components of $\Gamma$, and extra vertices from the vertices
of $\mathcal{A}$ that are disjoint from $\mathcal{C}$; these vertices
all have valence 3.  The edges of 
$\Gamma$ are naturally in 1-1 correspondence with the edges of
$\mathcal{A}$; thus to prove the claim all we need to show is that 
$\Gamma$ has at most $2n-1$ edges.

It is easy to see that $\Gamma$ is connected because $\mathcal{C} \cup \mathcal{A}$
is.  Moreover, if there is any edge $e$ of $\Gamma$ so that the graph 
obtained from $\Gamma$ by removing $e$ is connected, then the
graph obtained from $\mathcal{C} \cup \mathcal{A}$ by removing the
corresponding edge is connected as well; this contradict our construction.
Hence $\Gamma$ is a tree, with $n+1$ vertices of arbitrary valence,
and all other vertices have valence 3.
In particular,  $\Gamma$ has at most $n+1$ vertices 
of vertices of valence 1 or 2.  We will use the following claim:

\bigskip

\noindent {\bf Claim.}  Let $G = (V,E)$ be a finite tree with vertex set $V$ and edge set $E$ and
with $k \geq 2$ vertices of valence 1 or 2.
Then $G$ has at most $2k-3$ edges.

\bigskip

We prove the claim by induction on $k$.  If $k=2$, it is easy to see that $G$ is a single 
edge, and indeed $1 = 2 \cdot 2 -3$.  Assume from now on that $k > 2$.

It is well known that every finite tree has a leaf (that is, a vertex of valence 1).  Let
$v \in V$ be a leaf and $(v,v') \in E$ the only edge containing $v$.  Consider 
$G' = (V - \{v\}, E - \{(v,v')\})$.  There are four cases, depending on the valence of $v'$
as a vertex of $G'$:
	\begin{enumerate}
	\item The valence of $v'$ is zero: then $G$ is a single edge, contrary to our assumption.
	\item The valence of $v'$ is one: that is, $v'$ is a leaf of $G'$.  Note that in this case both $v$
	and $v'$ have valence 1 or 2 in $G$, and we see that $G'$ has exactly $k-1$ vertices of valence
	1 or 2.  By induction $G'$ has at most $2(k-1) - 3 = 2k - 5$ edges.  Since $G$ has exactly one more
	edge than $G'$, $G$ has at most $2k-4$ edges in this case.
	\item The valence of $v'$ is two:  In this case, the number of vertices of valence 1 or 2 in 
	$G'$ is exactly $k$ (note that $v'$ has valence 3 in $G$).  Let $(v',v'')$ and $(v',v''')$ be the two edges 
	adjacent to $v'$.   Let $G''$ be the graph obtained from $G'$ by removing $v'$ from the vertex set
	and $(v',v'')$, $(v',v''')$ from the edge set, and add the edge $(v'',v''')$.  It is easy
	to see that $G''$ is a tree with exactly $k-1$ vertices of valence 1 or 2.  By induction $G''$
	has at most $2(k-1) -3$ edges.  Since $G'$ has one more edge than $G''$ and $G$ has one
	more edge than $G'$, $G$ has at most $2k-3$ edges as desired.
	\item The valence of $v'$ is at least three: then $G'$ has exactly $k-1$ vertices of valence 1 or 2.
	Similar to the above, we see that $G$ has at most $2k-4$ vertices in this case.
	\end{enumerate}

This proves the claim.  

To establish~(1), we use the claim and the fact that $\Gamma$ is a tree with at most 
$n+1$ vertices of valence 1 or 2, and see that the number of edges in $\Gamma$ is at most $2(n+1) - 3 = 2n-1$.

This completes the proof of Lemma~\ref{lem:complex}.

\end{proof}

Next, we prove the existence of totally geodesic disks that cut $V_{i,j}$ into balls.  We note that the disks 
may not be disjoint.  The precise statement is:

\begin{lem}
\label{lem:K}
For any $V_{i,j}$ there exists a 2-complex ${K}_{i,j} \subset V_{i,j}$ 
so that the following hold:
	\begin{enumerate}
	\item $V_{i,j}$ cut open along ${K}_{i,j}$ is a single ball.  
	\item The faces of ${K}_{i,j}$ are totally geodesic disks.
	The edges of ${K}_{i,j}$ have valence 3.
	\item All the vertices are on $\del V_{i,j}$.
	\item ${K}_{i,j}$ has at most $2C_1 -  1$ faces and $4C_{1}-2$ edges in the interior of $V_{i,j}$.
	\end{enumerate}
\end{lem}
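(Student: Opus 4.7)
Proof plan:

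If $V_{i,j}$ is a $3$-ball we take $K_{i,j}=\emptyset$ and are done; otherwise the handlebody $V_{i,j}$ has genus $g = g(V_{i,j}) \geq 1$, with $g \leq C_1$ by Lemma~\ref{lem:genus}. By Lemma~\ref{lem:HB}(2) there is a cone $\tilde{C}$, a component of $\pi^{-1}(\mathrm{cl}(M \setminus X))$, so that $\tilde{V}_{i,j}$ is $\tilde{s}$-convex for every $\tilde{s} \in \tilde{V}_i \cap \tilde{C}$; fix a generic such $\tilde{s}$, not on the axis of $\tilde{C}$. The Euler-characteristic argument in the proof of Lemma~\ref{lem:genus} shows that $\tilde{P}_{i,j} := \tilde{V}_{i,j} \cap \partial\tilde{C}$ is planar with exactly $g+1$ boundary circles; call this collection $\mathcal{C} \subset \partial \tilde{C}$.

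I then apply Lemma~\ref{lem:complex} to $\tilde{C}$, $\tilde{s}$, and $\mathcal{C}$ (with $n=g$), obtaining a graph $\mathcal{A} \subset \partial\tilde{C}$ with at most $2g-1 \leq 2C_1-1$ edges, so that each $\tilde{e} \in \mathcal{A}$ lies with $\tilde{s}$ in a totally geodesic plane $\tilde{L}_{\tilde{e}}$, and $\mathcal{A} \cup \mathcal{C}$ is connected trivalent with the property that removing any edge of $\mathcal{A}$ disconnects it. For each edge $\tilde{e}$ of $\mathcal{A}$, I define $\tilde{D}_{\tilde{e}}$ to be the totally geodesic disk in $\tilde{L}_{\tilde{e}}$ swept out by the geodesic rays from $\tilde{s}$ through points of $\tilde{e}$, extended into the exterior $\tilde{E}$ until exit from $\tilde{V}_i$; convexity of $\tilde{C}$ and $\tilde{V}_i$ guarantees that $\tilde{D}_{\tilde{e}} \subset \tilde{V}_{i,j}$ is an embedded disk bounded by $\tilde{e}$ on $\partial\tilde{C}$, an arc on $\partial\tilde{V}_i$, and two geodesic side arcs. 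Since $\pi|_{\tilde{V}_{i,j}}$ is a diffeomorphism by Lemma~\ref{lem:HB}(3), the disks $\tilde{D}_{\tilde{e}}$ project to totally geodesic disks $D_{\tilde{e}} \subset V_{i,j}$; I take $K_{i,j}$ to be the 2-complex formed by $\bigcup_{\tilde{e}} D_{\tilde{e}}$ together with the cell structure it induces on $\partial V_{i,j}$.

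Properties~(2) and~(3) then hold by construction: faces are totally geodesic by choice of $\tilde{L}_{\tilde{e}}$; trivalence is inherited from Lemma~\ref{lem:complex}; and all vertices lie on $\partial V_{i,j}$ since the endpoints of the side arcs land on $\partial\tilde{C}$ or on $\partial\tilde{V}_i$. The face bound in~(4) is immediate from Lemma~\ref{lem:complex}, and the $4C_1-2$ bound on interior edges follows because each of the $\leq 2C_1-1$ disks contributes at most two interior ``side'' edges. For~(1) I use the deformation retract $f$ of Lemma~\ref{lem:deformation}: $f$ descends to a deformation retract $\tilde{V}_{i,j} \setminus \bigcup \tilde{D}_{\tilde{e}} \to \tilde{P}_{i,j} \setminus \mathcal{A}$. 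Since $\mathcal{C} \cup \mathcal{A}$ is minimally connected, contracting each curve of $\mathcal{C}$ yields a tree, so $\tilde{P}_{i,j}$ cut along $\mathcal{A}$ is a single disk, and therefore $V_{i,j}$ cut along $K_{i,j}$ is a ball.

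The main obstacle will be rigorously justifying the disk construction. Lemma~\ref{lem:complex} produces $\mathcal{A}$ only on $\partial \tilde{C}$ without placing it inside $\tilde{P}_{i,j}$, so I must verify that each $\tilde{D}_{\tilde{e}}$ is genuinely an embedded disk in $\tilde{V}_{i,j}$, handle possible transverse interior intersections among different $\tilde{D}_{\tilde{e}}$'s via genericity of $\tilde{s}$ and further trivalent subdivision, and address the edge case where $P_{i,j}$ has no boundary (a closed torus), where $\mathcal{C} = \emptyset$ and a meridian cutting disk must be produced directly.
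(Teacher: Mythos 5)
Your proposal is essentially the paper's own argument: both apply Lemma~\ref{lem:complex} to $\partial\widetilde P_{i,j}$, define the faces of the cutting complex as the totally geodesic disks obtained by sweeping geodesics from $\tilde s$ through edges of $\mathcal A_{i,j}$ (the paper phrases this as $f^{-1}(\mathcal A_{i,j})$ where $f$ is the retraction of Notation~\ref{notate:f}, which is literally the same set), and both use the cut-open--retract argument to conclude a single ball. The concerns you flag at the end are not real gaps: since the complex is $f^{-1}(\mathcal A_{i,j})$ for a map $f$, distinct faces $f^{-1}(\tilde e)$ and $f^{-1}(\tilde e')$ can only meet along $f^{-1}$ of a shared vertex of $\mathcal A_{i,j}$, so there are no unwanted transverse intersections and trivalence is automatic; and the closed-torus case cannot occur, because $\widetilde P_{i,j}$ is a surface-with-boundary embedded in the topological cylinder $\partial\widetilde C$ and hence planar with nonempty boundary.
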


\begin{proof}
If $V_{i,j} = V_{i}$ then it is a ball and there is nothing to prove.
Assume this is not the case.  Then $V_{i} \neq V_{i,j}$, and hence  $V_{i} \cap \mbox{cl}(M\setminus X)
\neq \emptyset$; by Lemma~\ref{lem:ThinInVoronoi}, $\wt V_{i}$ intersects 
exactly one preimage of 
$\mbox{cl}(M\setminus X)$, say $\wt C$.  Recall that $\wt C$ is a cone.  
We first establish conditions
analogous to~(1)--(4) for $\wt V_{i,j}$.

Let $\wt P_{i,j}$ denote $\wt V_{i,j} \cap \del \wt C$.  
By Lemma~\ref{lem:deformation}, $\wt V_{i,j}$ deformation retracts 
onto $\wt{P}_{i,j}$; hence $\wt{P}_{i,j} \subset \del \wt{C}$
is a connected, planar surface. 
 
Let $\tilde s \in \wt V_{i} \cap \wt C$ be a point not on the axis of $\wt C$.  The Voronoi cells 
were constructed around generic points $\{x_{i}\}$.  Therefore, after perturbing $\tilde s$ slightly
if necessary, 
$\del \wt P_{i,j} \subset \del \wt C$ is a generic collection of circles, and Lemma~\ref{lem:complex}
applies to give a  graph, denoted $\mathcal{A}_{i,j}$, so that $\del \wt P_{i,j}$ and
$\mathcal{A}_{i,j}$ fulfill the conditions of Lemma~\ref{lem:complex}.  
It follows easily from Lemma~\ref{lem:complex}~(4) that  $\mathcal{A}_{i,j} \subset \wt P_{i,j}$.

Set 
$\mathcal{K}_{i,j}$ to be $f^{-1}(\mathcal{A} _{i,j})$, where the function $f$ is described in 
Notation~\ref{notate:f}.  By construction, for every edge $\tilde e$ of $\mathcal{A}_{i,j}$,
$f^{-1}(\tilde e)$ is the intersection of the totally geodesic plane containing $\tilde e$
and $\tilde s$ with $\wt V_{i,j}$.  Since $f$ is a deformation retract, $f^{-1}(\tilde e)$
is a disk.  These disks are the {\it faces} of $\mathcal{K}_{i,j}$; thus 
the faces of $\mathcal{K}_{i,j}$ are totally
geodesic disks.
By Lemma~\ref{lem:complex}, $\del\wt{P}_{i,j} \cup \mathcal{A}_{i,j}$ is a trivalent graph.
The edges of $\mathcal{K}_{i,j}$ correspond to the preimage of vertices of $\mathcal{A}_{i,j}$,
and hence have valence 3.   This establishes~(2) for $\wt{V}_{i,j}$.

There are 3 type of vertices: vertices of $\mathcal{K}_{i,j}$, 
intersection of edges of $\mathcal{K}_{i,j}$ with faces of $\wt V_{i,j}$,
and intersection of faces of $\mathcal{K}_{i,j}$ with edges of $\wt V_{i,j}$.
By construction, $\mathcal{K}_{i,j}$ has no vertices.  By Lemma~\ref{lem:HB}~(3)
the faces and edges of $\wt V_{i,j}$ are contained in its boundary.  Condition~(3)
follows.

Denote the genus of $\wt V_{i,j}$ by $n$.  Then 
$|\del \wt{P}_{i,j}| = n+1$. 
By Lemma~\ref{lem:complex}, $\mathcal{A}_{i,j}$ has at most $2n-1$ edges.
By construction, each edge of $\mathcal{A}_{i,j}$ corresponds to exactly one face
of $\mathcal{K}_{i,j}$.
Hence $\mathcal{K}_{i,j}$ has at most $2n-1$ faces.  By Lemma~\ref{lem:genus},
$n = g(\wt{V}_{i,j}) \leq C_{1} $; thus ${K}_{i,j}$ has at most $2C_1 - 1$ faces.  
 Similarly, every vertex of $\mathcal{A}_{i,j}$ corresponds to exactly one
 edge of $\mathcal{K}_{i,j}$ in the interior of $\wt V_{i,j}$.  Since the number of
 vertices of $\mathcal{A}_{i,j}$ is at most twice the number of its edges, we
 see that the number of edges of $\mathcal{K}_{i,j}$ in the interior
 of $\wt V_{i,j}$ is at most $4C_{1} - 2$.  This establishes~(4) for $\wt{V}_{i,j}$.

By construction, the components of $\widetilde{V}_{i,j}$ cut open along $\mathcal{K}_{i.,j}$
deformation retract onto $\wt{P}_{i,j}$ cut open along $\mathcal{A}_{i,j}$. 
It follows from Lemma~\ref{lem:complex}~(3) that $\mathcal{P}_{i,j}$ cut open along
$\mathcal{A}_{i,j}$ consists of disks,  and from Lemma~\ref{lem:complex}~(4) that
this is a single disk.  We conclude that $\wt V_{i,j}$ cut open along 
$\mathcal{K}_{i,j}$ is a single ball, establishing~(1)  for $\wt{V}_{i,j}$.
 
By Lemma~\ref{lem:HB}~(3) the projection of $\wt{V}_{i,j}$ to $V_{i,j}$ is a diffeomorphism.
Setting $K_{i,j}$ to be the image of $\mathcal{K}_{i,j}$ under the universal covering 
projection we obtain a complex
fulfilling the requirements of Lemma~\ref{lem:K}
 \end{proof}

\section{Proof of Proposition~\ref{pro:decomposition}}
\label{sec:ProofProp}
We use the notation of the previous sections.

We begin with the decomposition of $X$ given by $V_i \cap X = \{V_{i,j}\}_{j=1}^{n_{i}}$.

Fix one $V_{i,j}$ and consider its decomposition 
obtained by projecting the faces of $\wt V_{i,j}$ to $V_{i,j}$
(as discussed in Lemma~\ref{lem:voronoi}).  Recall that all the faces of this
decomposition are totally geodesic by construction.   
We decompose $V_{i,j}$ further using the faces of $K_{i,j}$, as described in Lemma~\ref{lem:K}.  
By Lemma~\ref{lem:K}~(2), these faces are totally geodesic as well.
By Lemma~\ref{lem:K}~(4), all the vertices of this decomposition are on $\del V_{i,j}$.
We first bound the number of these vertices:

\medskip

\noindent{\bf Claim.}  There is a universal $\bar{C}_{0}$ so that 
the number of vertices in $V_{i,j}$ is at most $\bar{C}_{0}$.

\medskip

\begin{proof}[Proof of claim]
By Lemma~\ref{lem:HB}~(3), the universal covering projection induces a diffeomorphism
between $V_{i,j}$ and $\wt V_{i,j}$.  It follows that a 
totally geodesic disk and a geodesic segment in $V_{i,j}$ intersect at most once; this
will be used below several times.
By Lemma~\ref{lem:K}~(2) all the the vertices are contained in $\del V_{i,j}$.

We first bound the number of vertices that lie in the interior of $X$.  
There are three types of vertices:

	\begin{description}
	\item[The intersection of three faces of $\del V_{i}$] By Lemma~\ref{lem:c0} there are
	at most $C_0$ such vertices.  (By transversality the intersection of more
	than three faces of $\del V_{i}$ does not occur.)
	\item[The intersection of an edge of $\del V_{i}$ with a face of $K_{i,j}$] 
	Since every face of $\del V_{i,j}$ is totally geodesic
	and every edge of $K_{i,j}$ is a geodesic segment, every face meets every edge at most
	once.  By Lemma~\ref{lem:c1} there are at most $C_1$ edges on $\del V_{i,j}$, and by 
	Lemma~\ref{lem:K} there are at most $2C_{1} - 1$ faces in $K_{i,j}$.  It follows
	that there are at most $C_{1}(2C_{1} - 1)$ vertices of this type.
	\item[The intersection of a face of $\del V_{i}$ and an edge of $K_{i,j}$]  
	Since every edge of $\del V_{i,j}$ is a geodesic segment
	and every face of $K_{i,j}$  is totally geodesic,
	every edge meets every face at most once.   By Lemma~\ref{lem:c2}
	there are at most $C_{2}$ edges on $\del V_{i,j}$.  It is clear that
	we are discussing only edges of $K_{i,j}$ that lie in the interior of $V_{i,j}$.
	By Lemma~\ref{lem:K}~(4) there are at most $4C_{1}-2$ such edges.  It follows that
	there are at most $C_{2}(4C_{1}-2)$ such vertices.
	\end{description}

Next we bound the number of vertices on $\del X$. There are two cases to consider.

	\begin{description}
	\item [An endpoint of an edge of $\del V_{i}$] Each such vertex is an endpoint of 
	a segment (as defined before Lemma~\ref{lem:c1}) and hence by that lemma there
	are at most $2C_1$ such vertices.
	\item [The intersection of a face of $K_{i,j}$ with $\del (V_{i} \cap \del X)$ and the 
	intersection of an edge of $K_{i,j}$ with $\del X$]  Every face of $K_{i,j}$ contributes
	at most two such vertices.  By Lemma~\ref{lem:K}~(4), there are at most $8C_{1} - 4$
	such points.
	\end{description}
The claim follows by setting (the different contributions are in brackets)
$$\bar{C} = [C_{0}] + [C_1(2C_1-1)] + [C_{2}(4C_{1}-2)] + [2C_1] + [8C_{1} - 4].$$
\end{proof}

Next, we subdivide each face into triangles by adding edges (note that this does not 
require faces to be disks).  This is done in $X$, so the
subdivision agrees on adjacent cells (including a cell that is adjacent to itself).  Note that the new
edges have valence 2.  Since the edges of $K_{i,j}$ have valence 3 and edges on the 
boundary have valence at most 3, all edges have valence at most 3.

By Lemma~\ref{lem:K}~(1) $V_{i,j}$ cut open along $K_{i,j}$ is a single ball.
Therefore there is a map from the closed ball $B$ onto $V_{i,j}$ that
is obtained by identifying disks on $\del B$ that correspond to the disks of $K_{i,j}$.  Since edges  
have valence at most 3, no
point of $V_{i,j}$ has more that 3 preimages.
The preimages of the triangulated faces induce a 
triangulation of $\del B$ with at most $3 \bar{C}_{0}$ vertices.  Denote the number of faces,
edges, and vertices in this triangulation by $f$, $v$, and $e$, respectively.  Note that
$3f = 2e$, or $e = \frac{3}{2}f$.  Euler characteristic gives: $2= f - e+v =   -\frac{1}{2} f + v$, or  
$f = 2v - 4$.  Thus, $f \leq 6 \bar{C}_{0} - 4$.  
We obtain a triangulation of $B$ by adding a vertex in the center of $B$, and coning every
vertex, edge, and triangle in $\del B$.  By construction there are exactly $f$ tetrahedra in this
triangulation.  The image of this triangulation gives a triangulation of $V_{i,j}$ that has 
at most $6\bar{C}_{0} - 4$ tetrahedra.  By Lemma~\ref{lem:HB}~(1), $n_{i} \leq C_{0}$.  
 Since $\{V_{i,j}\}_{{j=1}}^{n_{i}}$ are mutually
disjoint, by considering their union we obtain a triangulation of $V_{i} \cap X$
with at most $(6\bar{C}_{0} - 4)C_{0}$ tetrahedra.

By construction the triangulation of $V_{i} \cap X$ agrees with that of $V_{i'} \cap X$
on $(V_{i} \cap X) \cap (V_{i'} \cap X)$.  

Proposition~\ref{pro:decomposition} follows 
from this and Lemma~\ref{lem:c0} by setting 
$C = \max\{C_{3,}(6\bar{C}_{0} - 4)C_{0}\}$.

\providecommand{\bysame}{\leavevmode\hbox to3em{\hrulefill}\thinspace}
\providecommand{\MR}{\relax\ifhmode\unskip\space\fi MR }
\providecommand{\MRhref}[2]{%
  \href{http://www.ams.org/mathscinet-getitem?mr=#1}{#2}
}
\providecommand{\href}[2]{#2}



\begin{thebibliography}{1}

\bibitem{BenedettiPetronio}
Riccardo Benedetti and Carlo Petronio, \emph{Lectures on hyperbolic geometry},
  Universitext, Springer-Verlag, Berlin, 1992. \MR{MR1219310 (94e:57015)}
 
 
\bibitem{breslin}  
Breslin, William,\emph{Thick triangulations of hyperbolic $n$-manifolds},
Pacific J. Math. \textbf{241} (2009), no.~2, 215--225, \MR{2507575 (2010b:30066)}

\bibitem{decay}
Bennett Chow, Sun-Chin Chu, David Glickenstein, Christine Guenther, James
  Isenberg, Tom Ivey, Dan Knopf, Peng Lu, Feng Luo, and Lei Ni, \emph{The
  {R}icci flow: techniques and applications. {P}art {I}}, Mathematical Surveys
  and Monographs, vol. 135, American Mathematical Society, Providence, RI,
  2007, Geometric aspects. \MR{MR2302600 (2008f:53088)}

\bibitem{hatcher}
Allen Hatcher, \emph{Algebraic topology}, Cambridge University Press,
  Cambridge, 2002. \MR{1867354 (2002k:55001)}

\bibitem{kojima}
Sadayoshi Kojima, \emph{Isometry transformations of hyperbolic
  {$3$}-manifolds}, Topology Appl. \textbf{29} (1988), no.~3, 297--307.
  \MR{MR953960 (90c:57033)}

\bibitem{thurston}
William~P Thurston, \emph{{The Geometry and Topology of Three-Manifolds}},
  http://www.msri.org/publications/books/gt3m/, 1977.

\end{thebibliography}

\end{document}